\documentclass[reqno]{amsart}
%%%%%%%%%%%%%%%%%%%%%%%%%%%%%%%%%%%%%%%%%%%%%%%%%
% these bits of code coerce AMS-LaTeX to use slanted type      %
% for theorem bodies; if you prefer italics, just delete them  %
\makeatletter                                                  %
\def\th@plain{\slshape}                                        %
\makeatother                                                   %
%%%%%%%%%%%%%%%%%%%%%%%%%%%%%%%%%%%%%%%%%%%%%%%%%

%\documentclass[a4paper,11pt, oneside, %titlepage%,draft
%]{article}
\usepackage[italian,english]{babel} 
\usepackage[utf8x]{inputenc}
\usepackage[T1]{fontenc} 
\usepackage{indentfirst} 
\usepackage{amsmath,amsfonts,amssymb,amsthm}
\usepackage[all]{xy}
\usepackage{xcolor}
%\graphicspath{{./Immagini/}} 
\usepackage{graphicx}
\usepackage{geometry}
\geometry{a4paper,top=3cm,bottom=3cm,left=3cm,right=3cm}
\usepackage{caption}
\usepackage{subcaption}
\usepackage[font={small}, labelfont=bf]{caption}[2004/07/16]
\usepackage{url,mathrsfs}
\usepackage{tikz}

%\numberwithin{equation}{section}
\newcommand*{\diff}{\mathop{}\!\mathrm{d}}

\theoremstyle{plain}
\newtheorem{thm}{Theorem}%[section]
\newtheorem{prop}[thm]{Proposition}
\newtheorem{cor}[thm]{Corollary}
\newtheorem{lemma}[thm]{Lemma}

\theoremstyle{definition}
\newtheorem{defin}[thm]{Definition}

\theoremstyle{remark}
\newtheorem{remark}{Remark}

\newcommand{\R}{\mathbb{R}}

\newcommand{\N}{\mathbb{N}}

\newcommand{\norm}[1]{\left\lVert#1\right\rVert}

\newcommand{\modulo}[1]{\left\lvert#1\right\rvert}
\newcommand{\newword}[1]{\textsl{#1}}
\newcommand{\sltre}{\SL(3,\R)}
\newcommand{\lietre}{\mathfrak{sl}(3,\R)}

\DeclareMathOperator{\spn}{span}

\DeclareMathOperator{\SL}{SL}

\begin{document}

\bibliographystyle{plain}

\sloppy

\title[Parabolic perturbations of unipotent flows]{Parabolic perturbations of unipotent flows on compact quotients of~$\sltre$}

\author[D.~Ravotti]{Davide Ravotti}
\address{School of Mathematics\\
University of Bristol\\
University Walk\\
BS8 1TW Bristol, UK}
\email{davide.ravotti@bristol.ac.uk}

\begin{abstract}
We consider a family of smooth perturbations of unipotent flows on compact quotients of SL$(3,\mathbb{R})$ which are not time-changes. More precisely, given a unipotent vector field, we perturb it by adding a non-constant component in a commuting direction. We prove that, if the resulting flow preserves a measure equivalent to Haar, then it is parabolic and mixing. The proof is based on a geometric shearing mechanism together with a non-homogeneous version of Mautner Phenomenon for homogeneous flows. Moreover, we characterize smoothly trivial perturbations and we relate the existence of non-trivial perturbations to the failure of cocycle rigidity of parabolic actions in SL$(3,\mathbb{R})$.
\end{abstract}

\thanks{\emph{2010 Math.~Subj.~Class.}: 37A25, 37C40.}

\maketitle

\section{Introduction}

In this paper, we give a contribution to the ergodic theory of parabolic flows, namely flows for which nearby orbits diverge polynomially in time (see Definition \ref{def1}). Classical examples of parabolic flows are given by horocycle flows on compact negatively curved surfaces, more in general by unipotent flows on semisimple Lie groups, and by nilflows on nilmanifolds. Although the homogeneous case is well-understood, very little is known for general smooth parabolic flows. An important class of non-homogeneous parabolic flows is given by perturbations of homogeneous ones; the simplest of which are time-changes. A smooth time-change of a flow is obtained by varying smoothly the speed of the points while keeping the same trajectories. 

It is natural to ask which ergodic properties persist after performing a time-change.
In the case of the horocycle flow, mixing and mixing of all orders for all time-changes which satisfy a mild differentiability condition were proved by Marcus in \cite{marcus:horocycle, marcus:horocycle2}. More recently, Tiedra de Aldecoa \cite{tiedra:spectrum} and Forni and Ulcigrai \cite{forniulcigrai:timechanges} independently showed that generic time-changes have absolutely continuous spectrum (in the latter paper, the authors show in addition that the spectrum is equivalent to Lebesgue; see also the result by Simonelli \cite{simonelli:spectrum}). The case of time-changes of nilflows has been treated by Avila, Forni and Ulcigrai in \cite{afu:heisenberg} for the Heisenberg group and by the author in \cite{ravotti:nilflow} for a class of higher-dimensional and higher-step nilpotent groups. 

Here, we investigate the ergodic properties of a class of parabolic perturbations of unipotent flows on compact quotients of $\sltre$ which \newword{are not} time-changes or skew-product constructions; to the best of our knowledge, this is the first such example. We consider a unipotent vector field $U$ on a compact homogeneous manifold $\mathcal{M} = \Gamma \backslash \sltre$ and we add a non-constant component in a transverse direction $Z$ commuting with $U$. More precisely, given a smooth function $\beta \colon \mathcal{M} \to \R$, we consider the flow $\{ \widetilde{h}_t\}_{t \in \R}$ induced by the vector field $\widetilde{U}=U + \beta Z$, see \S\ref{s:preliminaries}. We prove that, if $\{ \widetilde{h}_t\}_{t \in \R}$ preserves a measure equivalent to Haar, then it is ergodic and, in fact, mixing. The key observation is that there exists a vector field $W$ such that the Lie derivative $\mathscr{L}_{\widetilde{U}}(W)$ is parallel to $Z$. Roughly speaking, this means that short segments in direction $W$ get sheared along the direction $Z$ when flown via $\{ \widetilde{h}_t\}_{t \in \R}$. Since the flow in direction $Z$ is ergodic, such segments become equidistributed.

We chose to work with SL$(3,\mathbb{R})$ in order to provide a concrete example and making the computations explicit, but we believe it should be possible to carry out a similar approach for suitable perturbations of unipotent flows in compact quotients of all semisimple Lie groups, see Remark \ref{rk:ingen}. 

In our proof, we exploit the geometrical information given by computing the Lie brackets $[\widetilde{U},W]$ (see \S\ref{s:ODE}) and we employ smooth analogues of well-known homogeneous arguments. The main difficulty in this setting is to prove that $\{ \widetilde{h}_t\}_{t \in \R}$ is ergodic. We remark that this is not an issue in the case of time-changes, since they preserve the orbit structure and they admit an invariant measure equivalent to Haar; hence they are ergodic. The proof of ergodicity for the perturbed flow $\{ \widetilde{h}_t\}_{t \in \R}$ can be seen as a non-homogeneous version of Mautner Phenomenon and we believe it is interesting in its own right, see \S\ref{s:ergodicity}. In order to help the reader in following the arguments, we postpone the proof of an auxiliary proposition to~\S\ref{s:lemmas}. The proof of mixing %is similar to the arguments by Forni and Ulcigrai in \cite{forniulcigrai:timechanges}, and 
is presented in~\S\ref{s:ergodicity}. In \S\ref{s:ppcorig}, we relate the existence of perturbations which are not smoothly trivial (i.e., which are not $\mathscr{C}^1$-conjugated to the original flow) to the failure of cocycle rigidity for parabolic actions in $\sltre$, see Theorem \ref{thm:cocrig}, whose proof is contained in \S\ref{s:pp7}.

\section{Preliminaries}\label{s:preliminaries}

Let $\mathcal{M} = \Gamma \backslash \sltre$  be a compact connected homogeneous manifold and let $\omega$ be the differential form on $\mathcal{M}$ inducing the normalised Haar measure. 
The Lie algebra $\lietre$ of $\sltre$ consists of $3 \times 3$ matrices $X$ with zero trace; we identify it with the set of left-invariant vector fields on $\mathcal{M}$ (see, e.g., \cite[Proposition 1.72]{gallot:riemannian}). 

Denote by $E_{i,j}$ the $3 \times 3$ matrix with 1 in position $(i,j)$ and 0 elsewhere. We decompose 
$$
\lietre = \mathfrak{n}^{\text{tr}} \oplus \mathfrak{a} \oplus \mathfrak{n},
$$
where 
$$
\mathfrak{a} = \spn \left\{ \frac{1}{2}( E_{1,1} - E_{2,2}), \frac{1}{2}( E_{2,2} - E_{3,3}) \right\}
$$ 
is a maximal abelian subalgebra, and 
$$
\mathfrak{n} = \spn \{ E_{1,2}, E_{2,3}, E_{1,3} \} \text{\ \ \ and\ \ \ } \mathfrak{n}^{\text{tr}} = \spn \{E_{3,1} , E_{2,1}, E_{3,2} \}
$$
are nilpotent subalgebras: the only nontrivial brackets in $\mathfrak{n}$ and $\mathfrak{n}^{\text{tr}}$ are $[E_{1,2},E_{2,3}]=E_{1,3}$ and $[E_{3,2},E_{2,1}]=E_{3,1}$ respectively. 
More generally, the commutation relations in $\lietre$ are given by
$$
[E_{i,j}, E_{l,m}]= \delta_{j,l} E_{i,m} - \delta_{m,i} E_{l,j}
$$
where $\delta_{i,j}$ is the Kronecker delta.
We remark that the centre $\mathfrak{z}(\mathfrak{n})$ of $\mathfrak{n}$ is 1-dimensional and is generated by $Z:= E_{1,3}$. 
Let 
\begin{equation}\label{eq:thebasis}
\mathscr{B}= \left\{ E_{3,1} , E_{2,1}, E_{3,2} ,   \frac{1}{2}( E_{1,1} - E_{2,2}), \frac{1}{2}( E_{2,2} - E_{3,3})  , E_{1,2}, E_{2,3}, E_{1,3} \right\}
\end{equation}
be the basis of $\lietre$ associated to the decomposition above: it is a frame on $\mathcal{M}$, namely a set of vector fields which gives a basis of the tangent space $T_p\mathcal{M}$ at every point $p \in \mathcal{M}$.

For any vector field $X$ (not necessary left-invariant) on $\mathcal{M}$, we denote by $\{ \varphi^X_t \}_{t \in \R}$ the induced flow. If $X \in \lietre$, we have an explicit formula for $\{ \varphi^X_t \}_{t \in \R}$, namely for all $p = \Gamma g \in \mathcal{M}$,
$$
\varphi^X_t (\Gamma g) = \Gamma g \exp(tX).
$$
In other words, the flow $\{ \varphi^X_t \}_{t \in \R}$ is given by the right-action on $\mathcal{M}$ of the one-parameter subgroup $\{ \exp(tX) : t \in \R \}$. By the Howe-Moore Ergodicity Theorem, every noncompact subgroup as above acts ergodically on $\mathcal{M}$. 

If $X \in \mathfrak{n}$, then $\{ \exp(tX) : t \in \R \}$ consists of unipotent matrices, hence $\{ \varphi^X_t \}_{t \in \R}$ is said to be a unipotent flow and $X$ a unipotent vector field. Unipotent flows are mixing of all orders and have countable Lebesgue spectrum, see \cite{mozes} and \cite{brezinmoore}. Moreover, a great amount of work has been carried out in investigating their ergodic invariant measures, from the results by Furstenberg \cite{furst} and Dani \cite{dani1} for the classical horocycle flow, by Dani and Margulis \cite{danimargulis} for generic unipotent flows in quotients of $\sltre$, to the celebrated theorems of Ratner \cite{ratner1, ratner2, ratner3}; see also the generalizations to $p$-adic groups by Ratner \cite{ratner4} and by Margulis and Tomanov \cite{margulistomanov}.

To prove these measure rigidity results, one crucially uses that nearby orbits diverge polynomially in time. One version of this property is encoded in the following definition.
\begin{defin}\label{def1}
We will say that the smooth flow $\{ \varphi_t \}_{t \in \R}$ is \newword{parabolic} if there exists $n \in \N$ such that 
$$
\norm{D \varphi_t}_{\infty} = O(|t|^n) \text{\ \ \ as } t \to 0,
$$
where $D\varphi_t$ is the differential of $\varphi_t$.
\end{defin}

Fix a non-zero unipotent vector field 
\begin{equation}\label{eq:theu}
U = c_{1,2}E_{1,2} + c_{2,3} E_{2,3} + c_{1,3} E_{1,3}  \in \mathfrak{n} \setminus \{0\},
\end{equation}
and consider a sufficiently small $\mathscr{C}^1$-function $\beta \colon \mathcal{M} \to \R$ (how small will be determined later, see \eqref{eq:conditionbeta} below). We investigate the properties of the flow $\{ \widetilde{h}_t \}_{t \in \R}$ induced by the non-constant perturbation $\widetilde{U} = U + \beta Z$ of $U$. If $U$ is parallel to $Z$, then the flow $\{ \widetilde{h}_t \}_{t \in \R}$ is a time-change of $\{ \varphi^Z_t \}_{t \in \R}$. 
In this case, it is known that the spectrum is absolutely continuous with respect to Lebesgue; in particular, the time-change is mixing, see \cite{simonelli:spectrum}.
In this paper, we will assume that $U \notin \mathfrak{z}(\mathfrak{n}) = \R Z$; i.e., we will consider perturbations which do not preserve orbits. In particular, we have to prove that they are ergodic, which constitutes the main difficulty in this set-up.

Since $U \in \mathfrak{n} \setminus \mathfrak{z}(\mathfrak{n})$, we have that $c_{1,2}^2 + c_{2,3}^2 > 0$; hence we can choose a unipotent $W \in \mathscr{B}$ such that $[U,W]= -c Z$ for some $c \neq 0$ (e.g., if $c_{1,2} \neq 0$, take $W = E_{2,3}$ so that $[U,W] = c_{1,2}Z$). We assume that 
\begin{equation}\label{eq:conditionbeta}
\norm{W\beta}_{\infty} < \modulo{c}. %= \modulo{[U,W]}.
\end{equation}
The result we prove is the following.
\begin{thm}\label{thm:1}
Suppose that the flow $\{ \widetilde{h}_t \}_{t \in \R}$ induced by $\widetilde{U} = U + \beta Z$ satisfies \eqref{eq:conditionbeta} and preserves a measure $\widetilde{\omega} = \lambda \omega$ equivalent to Haar, with a smooth density $\lambda \in \mathscr{C}^1(\mathcal{M})$. Then, $\{ \widetilde{h}_t \}_{t \in \R}$ is parabolic, namely $\lVert D \widetilde{h}_t \rVert_{\infty}=O(|t|^4)$, ergodic and mixing.
\end{thm}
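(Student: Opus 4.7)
The proof breaks into three assertions---parabolicity, ergodicity, and mixing---which I would address in that order, the middle being the substantive step.

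For parabolicity, I would compute the Lie derivatives $\mathscr{L}_{\widetilde{U}}(X)$ for each $X$ in the frame $\mathscr{B}$, using the formula $\mathscr{L}_{\widetilde{U}}(X) = [U,X] + \beta[Z,X] - (X\beta)Z$ valid for left-invariant $X$ together with the commutation relations of $\lietre$. A direct check confirms that $\mathscr{L}_{\widetilde{U}}$ is strictly triangular with respect to the filtration $\lietre \supset \mathfrak{a}\oplus \mathfrak{n} \supset \mathfrak{n} \supset \R Z \supset \{0\}$, save for a bounded scaling of $\R Z$ by $-Z\beta$. Since $D\widetilde{h}_t$ satisfies a linear ODE driven along each $\widetilde{h}_t$-orbit by this bounded, essentially nilpotent matrix, iterated integration yields polynomial growth; tracking the four filtration levels pins the degree at four, so $\lVert D\widetilde{h}_t \rVert_\infty = O(|t|^4)$.

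The ergodicity argument is a non-homogeneous Mautner phenomenon pivoting on the identity
\[
[\widetilde{U},W] = -cZ + \beta[Z,W] - (W\beta)Z = -(c + W\beta)Z,
\]
valid because $[Z,W]=0$ ($W \in \mathfrak{n}$, $Z \in \mathfrak{z}(\mathfrak{n})$). Condition \eqref{eq:conditionbeta} makes $c + W\beta$ of constant sign and bounded away from zero. Integrating the variational equation for $(\widetilde{h}_t)_*W$ shows it expands as $W + \Theta(t,\cdot)\, Z$ plus lower-order terms, where $\Theta(t,p) \sim \int_0^t (c+W\beta)\circ \widetilde{h}_{-s}(p)\, \diff s$ grows linearly in $t$. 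Consequently, for fixed $r \in \R$ and $t \to \infty$, the conjugate $\widetilde{h}_{t}\circ \varphi^{W}_{r/t}\circ \widetilde{h}_{-t}$ asymptotes to a central translation $\varphi^{Z}_{\overline{c}(p)\, r}$, where $\overline{c}(p)$ is a Birkhoff-type limit of $c + W\beta$ along the orbit through $p$; establishing that this limit exists as a deterministic nonzero constant $\widetilde{\omega}$-almost everywhere should be the role of the auxiliary proposition deferred to \S\ref{s:lemmas}. Given a $\widetilde{h}_t$-invariant $f \in L^2(\widetilde{\omega})$, unitarity of pre-composition by $\widetilde{h}_t$ (from the invariance of $\widetilde{\omega}$) yields the $L^2$-identity $f \circ \varphi^W_{r/t} = f \circ \bigl(\widetilde{h}_{t}\circ \varphi^{W}_{r/t}\circ \widetilde{h}_{-t}\bigr)$. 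Passing to the limit as $t \to \infty$ gives $f = f \circ \varphi^Z_{\overline{c}r}$ for every $r \in \R$, so $f$ is invariant under the non-compact subgroup $\{\exp(sZ) : s \in \R\}$. Howe--Moore applied in $L^2(\omega)$, transferred to $L^2(\widetilde{\omega})$ via the bounded non-vanishing density $\lambda$, then forces $f$ to be constant.

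For mixing, I would upgrade the shearing to a quantitative statement. Given $f, g \in \mathscr{C}^1(\mathcal{M})$, disintegrate $\widetilde{\omega}$ along short $\varphi^W$-arcs; under $\widetilde{h}_t$ with $t$ large, each such arc of length $\delta$ becomes $\mathscr{C}^0$-close to a $\varphi^Z$-orbit segment of length of order $|c|\delta t$. Unique ergodicity of the unipotent flow $\varphi^Z$ on the compact quotient $\mathcal{M}$ forces the $g$-averages along these sheared segments to converge uniformly to $\int g\, \omega$, and integrating against $f$ (after absorbing $\lambda$) gives $\int f \cdot (g\circ \widetilde{h}_t)\, \widetilde{\omega} \to \int f\, \widetilde{\omega}\cdot \int g\, \widetilde{\omega}$; density of $\mathscr{C}^1$ in $L^2$ extends this to all square-integrable observables. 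The genuine obstacle throughout is ergodicity: for a time-change the orbit structure is preserved and ergodicity is automatic, whereas here the non-homogeneous shearing-into-centre argument must be set up from scratch and, most delicately, the Birkhoff constant $\overline{c}$ controlling the shearing limit must be shown deterministic almost everywhere---precisely the role of the auxiliary proposition of \S\ref{s:lemmas}.
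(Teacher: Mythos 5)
Your overall strategy---shearing of short $W$-arcs into the $Z$-direction, identifying the asymptotic shear constant as a Birkhoff limit, and using the limiting $Z$-flow to kill $\widetilde{h}_t$-invariant functions---is the approach the paper takes. But there are two genuine gaps, and one systematic imprecision worth flagging.

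\textbf{Parabolicity.} You describe the $-Z\beta$ diagonal term on $\R Z$ as a ``bounded scaling'' and conclude polynomial growth by ``iterated integration.'' This does not follow: a bounded diagonal coefficient in a linear ODE a priori produces \emph{exponential} growth $\exp\bigl(\int_0^t Z\beta\circ\widetilde{h}_\tau\diff\tau\bigr)$, not polynomial. The polynomial bound is precisely where the hypothesis that $\widetilde{h}_t$ preserves $\lambda\omega$ enters: equation \eqref{eq:vol} gives $Z\beta = -\widetilde{U}\log\lambda$, so the exponential factor telescopes to $\lambda/(\lambda\circ\widetilde{h}_t)$, which is bounded by $(\max\lambda)/(\min\lambda)$. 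Without invoking \eqref{eq:vol} the argument is incomplete.

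\textbf{Mixing.} You invoke ``unique ergodicity of the unipotent flow $\varphi^Z$ on the compact quotient $\mathcal{M}$'' to get uniform convergence of $g$-averages along the sheared arcs. This is false in general: on a compact quotient of $\sltre$ the one-parameter flow generated by $Z=E_{1,3}$ need not be uniquely ergodic---by Ratner's measure classification its ergodic invariant measures are homogeneous, but there may be intermediate closed homogeneous subsets (e.g.~closed $N$-orbits) supporting proper ergodic measures, so the Haar measure is not the only one. The paper's mixing proof does not use unique ergodicity; it integrates by parts in $s$, reduces to estimating $\int_\gamma f\,\lambda\widehat{Z}$ along the sheared curves $\gamma$, and controls that integral by writing $f=\widetilde{Z}u$ as an $L^2$-coboundary for the ergodic flow $\varphi^{\widetilde{Z}}$ (which exists on a dense subspace), applying Stokes. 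This replaces your uniform-equidistribution step by an $L^1$/a.e.\ argument that is actually available.

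\textbf{Imprecision in the shearing limit.} Your shear coefficient $\Theta(t,p)\sim\int_0^t(c+W\beta)\circ\widetilde{h}_{-s}(p)\diff s$ drops the density. From \eqref{eq:solw} the correct push-forward is $(\widetilde{h}_t)_\ast(W)=W+\bigl(\tfrac{1}{\lambda}\int_{-t}^0(\lambda(c+W\beta))\circ\widetilde{h}_\tau\diff\tau\bigr)Z$, so the limiting flow is the time-change $\varphi^{\ell\widetilde{Z}}$ with $\widetilde{Z}=(1/\lambda)Z$, not the homogeneous $\varphi^{\ell Z}$. Your ergodicity conclusion survives (same orbit foliation), but the stated ``$L^2$-identity'' $f\circ\varphi^W_{r/t}=f\circ(\widetilde{h}_t\circ\varphi^W_{r/t}\circ\widetilde{h}_{-t})$ is not a pointwise identity: using $\widetilde{h}_t$-invariance of $f$ gives $f\circ(\widetilde{h}_t\circ\varphi^W_{r/t}\circ\widetilde{h}_{-t})=f\circ\varphi^W_{r/t}\circ\widetilde{h}_{-t}$, which differs from the left-hand side. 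What is true is equality of $L^2$-norms (via $\widetilde{\omega}$-invariance), and the paper exploits exactly this through a Cauchy--Schwarz equality argument, deducing $g\circ\varphi^{\ell\widetilde{Z}}_s=\pm g$, ruling out the sign $-1$ by iterating at time $s/2$, and then using ergodicity of a single time-$s$ map of the limiting flow rather than Howe--Moore directly.
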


In the following section, we explain and comment on the assumption of Theorem \ref{thm:1} on the existence of a smooth equivalent invariant measure and we point out the implications to our context of the failure of cocycle rigidity of parabolic action in SL$(3,\R)$, proved by Wang in \cite{wang}.
In particular, we show that there exist perturbations $\{ \widetilde{h}_t \}_{t \in \R}$ which preserve a smooth equivalent measure and are not smoothly isomorphic to the original homogeneous flow $\{ \varphi^U_t \}_{t \in \R}$.

\begin{remark}\label{rk:ingen}
The properties of the vector fields $U, Z \in \mathfrak{n}$ that we will exploit in the proof of Theorem \ref{thm:1} are the following:
\begin{enumerate}
\item the flow in direction $Z$ is ergodic,
\item $U$ and $Z$ can be included in a \newword{Heisenberg triple} $\{U,W, Z\}$, namely there exists $W \in \mathfrak{n}$ such that $[U,W] = Z$ and $[U,Z] = [W,Z] = 0$. 
\end{enumerate}
We thus believe that Theorem \ref{thm:1} holds in more general settings than the case of SL$(3,\mathbb{R})$. 
For example, consider a real semisimple Lie algebra $\mathfrak{g}$ and let $\mathfrak{g} = \mathfrak{k} \oplus \mathfrak{a} \oplus \mathfrak{n}$ be a Iwasawa decomposition, where $\mathfrak{n}$ is a $k$-step nilpotent subalgebra. 
Then, one can show that for almost every $U \in \mathfrak{n}$ there exists $W \in \mathfrak{n}$ and $Z \in \mathfrak{n}^{(k)}$ such that $\{U,W,Z\}$ is a Heisenberg triple.
Therefore, it is possible to generalize the proof of Theorem \ref{thm:1} to show that, also in this set-up, any flow induced by a vector field of the form $U+\beta Z$, with $\norm{\beta}_{\mathscr{C}^1}$ sufficiently small, which preserves a measure equivalent to Haar is parabolic and mixing.
\end{remark}

%%%%%%%%%%%%%%%%%%%%
%%%%%%%%%%%%%%%%%%%%
%%%%%%%%%%%%%%%%%%%%

\section{Trivial perturbations and cocycle rigidity}\label{s:ppcorig}

Let us consider a perturbation $\widetilde{U} = U + \beta Z$, with $\beta \in \mathscr{C}^1(\mathcal{M})$. 
Up to replacing $U$ with the homogeneous vector field $U + (\int_{\mathcal{M}} \beta\ \omega) Z$, we can assume that $\int_{\mathcal{M}} \beta\ \omega =0$.
We assume that there exists a $\mathscr{C}^1$-density function $\lambda \colon \mathcal{M} \to \R_{>0}$ such that the flow $\{ \widetilde{h}_t \}_{t \in \R}$ preserves the measure $\lambda \omega$ equivalent to Haar.
While this was obvious in the case of time-changes, see e.g.~\cite[\S2]{forniulcigrai:timechanges}, in our case it translates in the following condition
$$
0=\mathscr{L}_{\widetilde{U}}(\lambda \omega) = \diff (\widetilde{U} \lrcorner \lambda \omega) = \diff (\lambda U \lrcorner\ \omega + \beta \lambda Z \lrcorner\ \omega) = (U\lambda + Z(\beta \lambda)) \omega ,
$$
where $\mathscr{L}_{\widetilde{U}}(\lambda \omega) $ denotes the Lie derivative of $\lambda \omega$ with respect to $\widetilde{U}$ and $ \lrcorner$ is the contraction operator. Therefore, there exists a smooth equivalent invariant measure $\lambda \omega$ if and only if $\lambda$ is a solution to the following equation
\begin{equation}\label{eq:vol}
U\lambda + Z(\beta \lambda) = \widetilde{U}\lambda + \lambda Z\beta = 0, \text{\ \ \ with\ } \lambda >0.
\end{equation}
\begin{remark}\label{rk:comm}
The assumption of Theorem \ref{thm:1} is equivalent to the fact that there exists a time-change of the flow $\{\varphi^Z_t\}_{t \in \R}$ in direction $Z$ which commutes with $\widetilde{h}_t$. Indeed, if we set $\widetilde{Z} = (1/\lambda) Z$, we have 
$$
\mathscr{L}_{\widetilde{U}} (\widetilde{Z} ) = \left[ \widetilde{U},  \frac{1}{\lambda}Z \right]= \widetilde{U}\Big( \frac{1}{\lambda}\Big) Z - \frac{Z \beta}{\lambda} Z = -\frac{1}{\lambda^2}(\widetilde{U}\lambda + \lambda Z\beta)Z,
$$
which equals $0$ if and only if \eqref{eq:vol} holds. If this is the case, for every $r,t \in \R$, we have $ \widetilde{h}_t \circ \varphi^{\widetilde{Z} }_r = \varphi^{\widetilde{Z} }_r\circ \widetilde{h}_t$.
\end{remark}
Let us consider the equation 
\begin{equation}\label{eq:ufzg}
Uf + Zg = 0, \text{\ \ \ with\ } \int_{\mathcal{M}} f\ \omega = \int_{\mathcal{M}} g\ \omega = 0.
\end{equation}
We say that any smooth solution to \eqref{eq:ufzg} is a smooth cocycle over the abelian action of $U$ and $Z$, or a \newword{smooth $(U,Z)$-cocycle} for short.
In the language of foliated differential forms, a smooth $(U,Z)$-cocycle is a smooth closed foliated 1-form $\Omega = -g \diff U + f \diff Z$ with respect to the foliation generated by $U$ and $Z$. 

\begin{lemma}\label{thm:trivialtrivial}
Smooth measure-preserving perturbations $\widetilde{U}$ are in one-to-one correspondence with smooth $(U,Z)$-cocycles $(f,g)$, with $f > -1$, by 
$$
(\lambda, \beta) \mapsto \left(\lambda -1 , \lambda \beta - \int_{\mathcal{M}} \lambda \beta\ \omega \right) 
$$
\end{lemma}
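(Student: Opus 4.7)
The plan is to exhibit the inverse map explicitly and check both directions reduce to equation \eqref{eq:vol}. I will first argue that the map in the statement is well-defined. Starting from $(\lambda, \beta)$ with $\lambda \in \mathscr{C}^1(\mathcal{M}, \R_{>0})$ satisfying $\int_{\mathcal{M}} \lambda\ \omega =1$, $\int_{\mathcal{M}} \beta\ \omega = 0$, and the invariance equation \eqref{eq:vol}, I set $f := \lambda - 1$ and $g := \lambda \beta - \int_{\mathcal{M}} \lambda \beta\ \omega$. Clearly $f > -1$, and both $f$ and $g$ are smooth and have zero integral. Since the integral of $\lambda\beta$ is a constant, $Zg = Z(\lambda\beta)$, hence $Uf + Zg = U\lambda + Z(\lambda \beta) = 0$ by \eqref{eq:vol}, so $(f,g)$ is a smooth $(U,Z)$-cocycle.

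Next, I build the inverse. Given a smooth $(U,Z)$-cocycle $(f,g)$ with $f > -1$, define $\lambda := 1 + f$, which is smooth, strictly positive, and satisfies $\int_{\mathcal{M}} \lambda\ \omega = 1$. To recover $\beta$, note that the forward map determines $\lambda \beta$ only up to an additive constant; I therefore look for $\beta$ of the form $\beta = (g + C)/\lambda$ with $C \in \R$ chosen so that $\int_{\mathcal{M}} \beta\ \omega = 0$. This prescribes
$$
C = -\frac{\int_{\mathcal{M}} (g/\lambda)\ \omega}{\int_{\mathcal{M}} (1/\lambda)\ \omega},
$$
which is well-defined because $\lambda > 0$ on the compact manifold $\mathcal{M}$, so the denominator is strictly positive. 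The resulting $\beta$ is smooth and has zero mean. Finally, $U\lambda + Z(\lambda \beta) = Uf + Z(g + C) = Uf + Zg = 0$, which is \eqref{eq:vol}; so $\widetilde{U} = U + \beta Z$ is a measure-preserving perturbation with invariant density $\lambda$.

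It remains to check that the two maps are mutually inverse. Starting from $(\lambda, \beta)$, the forward map produces $(f,g)$ with $\lambda = 1+f$ and $\lambda \beta = g + \int_{\mathcal{M}} \lambda\beta\ \omega$; applying the inverse then recovers the same $\lambda$, and recovers $\beta$ because the constant $C$ constructed above must coincide with $\int_{\mathcal{M}}\lambda\beta\ \omega$ by the uniqueness of the representative of $\lambda \beta$ modulo constants for which $\beta$ has zero mean. Conversely, starting from $(f,g)$, going through the inverse map and then the forward map returns $(f, g)$ since, after subtracting its integral, $\lambda\beta = g + C$ yields back $g$.

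The proof is essentially a bookkeeping exercise: the only point requiring any care is the choice of the additive constant $C$, which is forced by the normalization $\int_{\mathcal{M}} \beta\ \omega = 0$ made at the beginning of the section. I do not expect any genuine obstacle; the whole content of the lemma is that the nonlinear equation \eqref{eq:vol} in $(\lambda,\beta)$ is, after the substitution $f = \lambda - 1$ and $g = \lambda\beta - \text{mean}$, equivalent to the linear cocycle equation \eqref{eq:ufzg}.
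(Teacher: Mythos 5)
Your proof is correct and follows the same route as the paper: in both directions you reduce the invariance equation \eqref{eq:vol} to the cocycle equation \eqref{eq:ufzg} via the substitution $f = \lambda - 1$, $g = \lambda\beta - \int\lambda\beta\,\omega$, and on the way back choose the additive constant so that $\beta$ has zero mean. You are somewhat more explicit than the paper in spelling out the formula for the constant $C$ and in verifying that the two maps are mutually inverse, but this is the same argument.
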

\begin{proof}
Given a perturbation $\widetilde{U} = U + \beta Z$ preserving the smooth measure $\lambda \omega$, from \eqref{eq:vol} we deduce that $f = \lambda -1$ and $g= \lambda \beta - ( \int \lambda \beta\ \omega) $ are a smooth solution of \eqref{eq:ufzg}, with $f > -1$.
Conversely, let $(f,g)$ be a smooth $(U,Z)$-cocycle with $f>-1$. Then, $\beta = (g+c)/(1+f)$ defines a perturbation $\widetilde{U}$ that preserves the measure $\lambda = 1+f$, where the constant $c \in \R$ is chosen so that $\int \beta \omega =0$. 
\end{proof}

We say that a perturbation $\widetilde{U}$ is \newword{smoothly trivial} if there exists a $\mathscr{C}^1$-diffeomorphism $F \colon \mathcal{M} \to \mathcal{M}$ which conjugates the perturbation $\{ \widetilde{h}_t\}_{t \in \R}$ to the homogeneous flow $\{ \varphi^U_t \}_{t \in \R}$, namely if the push-forward $(F)_{\ast}$ maps $\widetilde{U}$ to $ U$.

\begin{thm}\label{thm:conjug}
The perturbation $\{ \widetilde{h}_t\}_{t \in \R}$ is $\mathscr{C}^1$-conjugated to the homogeneous flow $\{ \varphi^U_t \}_{t \in \R}$ if and only if there exists $w \in \mathscr{C}^{\infty}(\mathcal{M})$, with $Zw > -1$, such that $\beta = \widetilde{U}w$.
\end{thm}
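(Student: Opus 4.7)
The plan is to prove both directions by realizing the conjugation as translation along the center direction $Z$.

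For the ($\Leftarrow$) direction, I assume $w \in \mathscr{C}^{\infty}(\mathcal{M})$ satisfies $\beta = \widetilde{U}w$ and $Zw > -1$, and consider the candidate diffeomorphism
$$ F(p) = \varphi^Z_{-w(p)}(p) = p\exp\bigl(-w(p) Z\bigr). $$
Using $[U, Z] = 0$ to commute $(U + \beta(p) Z)$ past $\exp(-w(p) Z)$, a chain-rule computation along $t \mapsto \widetilde{h}_t(p)$ gives
$$ dF_p\bigl(\widetilde{U}|_p\bigr) = U|_{F(p)} + \bigl(\beta(p) - \widetilde{U}w(p)\bigr) Z|_{F(p)}, $$
which reduces to $U|_{F(p)}$ by the cocycle identity, so $F_*\widetilde{U} = U$. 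A similar computation shows that $dF_p(Z|_p)$ equals $Z|_{F(p)}$ times a factor whose non-vanishing and sign are governed by the hypothesis on $Zw$, and $dF_p$ acts as identity-plus-triangular on the remaining directions of the basis $\mathscr{B}$; in particular $dF_p$ is invertible everywhere. Since $\mathcal M$ is compact and $F$ is homotopic to the identity through local diffeomorphisms $F_s(p) = \varphi^Z_{-s w(p)}(p)$, $s \in [0, 1]$, it is a global $\mathscr{C}^1$-diffeomorphism.

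For the ($\Rightarrow$) direction, I assume $F$ is a $\mathscr{C}^1$-diffeomorphism with $F_*\widetilde{U} = U$. The plan is to show that $F$ sends each $\varphi^Z$-orbit to itself, from which one can define $w(p)$ as the unique parameter with $F(p) = \varphi^Z_{-w(p)}(p)$; reversing the forward computation then delivers $\widetilde{U}w = \beta$, while invertibility of $dF$ in the $Z$-direction delivers $Zw > -1$. To establish that $F$ preserves the $\varphi^Z$-foliation, I will invoke Remark~\ref{rk:comm}: the time-change $\widetilde{Z} = \lambda^{-1} Z$ commutes with $\widetilde{U}$, so $F_*\widetilde{Z}$ is a smooth vector field commuting with $U$. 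The Lie-algebra centralizer of $U \in \mathfrak{n} \setminus \mathfrak{z}(\mathfrak{n})$ in $\lietre$ equals $\R U \oplus \R Z$, and by ergodicity of $\varphi^U_t$ from Howe–Moore, smooth $U$-invariant functions are constant; combining these facts, the smooth centralizer of $\varphi^U_t$ is also $\R U \oplus \R Z$. Hence $F_*\widetilde{Z} = aU + bZ$ for constants $a, b \in \R$. A comparison of orbit structures — e.g.\ the fact that $\widetilde{Z}$ preserves the $\varphi^Z$-leaves, while a generic $aU + bZ$ with $a \neq 0$ has dense orbits in $\mathcal{M}$ by Howe–Moore — forces $a = 0$, so $F_*\widetilde{Z}$ is parallel to $Z$ and $F$ respects the $\varphi^Z$-foliation.

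The principal obstacle lies in the ($\Rightarrow$) direction, and is twofold: promoting the Lie-algebraic centralizer statement to one about smooth vector fields commuting with $\varphi^U_t$, and subsequently eliminating the $U$-component of $F_*\widetilde{Z}$. Both steps rely crucially on the ergodicity of the unperturbed homogeneous flow. The forward direction, by contrast, is a routine chain-rule exercise, facilitated by the explicit orbit formula
$$ \widetilde{h}_t(p) = p\exp(tU)\exp\!\left(\int_0^t \beta\bigl(\widetilde{h}_s(p)\bigr)\, \diff s \cdot Z\right), $$
which follows at once from integrating $\widetilde{U} = U + \beta Z$ and using $[U, Z] = 0$.
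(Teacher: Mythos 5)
Your forward direction is essentially the paper's: define $F(p)=\varphi^Z_{\pm w(p)}(p)$ and verify $F_*\widetilde U=U$ by a chain-rule computation using $[U,Z]=0$; the sign convention is immaterial. The converse direction, however, has several genuine gaps.

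First, the Lie-algebraic centralizer claim is wrong in general. The centralizer of $U=c_{1,2}E_{1,2}+c_{2,3}E_{2,3}+c_{1,3}E_{1,3}$ in $\lietre$ equals $\R U\oplus\R Z$ only when $U$ is regular nilpotent, i.e.\ $c_{1,2}c_{2,3}\neq 0$. If, say, $U=c_{1,2}E_{1,2}$ then $[E_{1,2},E_{3,2}]=0$, so $E_{3,2}$ is in the centralizer, and similarly $E_{2,1}$ commutes with $E_{2,3}$. The paper's Lemma \ref{th:visz} explicitly flags these exceptional cases and rules them out by a separate argument. Moreover, promoting the algebraic centralizer statement to ``every smooth vector field commuting with $\varphi^U_t$ is left-invariant'' is itself a lemma that needs a recursive, compactness-based argument on the triangular ODE system (Lemma \ref{th:vising} in the paper): at each stage one gets $Ua=\text{const}$ for a coefficient function $a$, and a nonzero constant would force linear growth of $a$ along dense $U$-orbits, contradicting compactness. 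This is more than ``ergodicity implies invariant functions are constant.''

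Second, the mechanism you propose to eliminate the $U$-component of $F_*\widetilde Z$ cannot work. You say that $\widetilde Z$ preserves $\varphi^Z$-leaves while $aU+bZ$ with $a\neq0$ has dense orbits, but $\varphi^Z$ itself has dense orbits almost everywhere, so density distinguishes nothing; and of course the homogeneous flow generated by $aU+bZ$ preserves its own leaves just as $\widetilde Z$ does. Showing that a time-change of $\varphi^Z$ cannot be smoothly (even measurably) conjugate to $\varphi^{aU+bZ}$ with $a\neq0$ is precisely where the paper invokes the Kakutani equivalence invariant $e(\cdot,\log)$ of Kanigowski--Vinhage--Wei and computes it from Jordan block data. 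Some invariant of this strength is needed; orbit topology alone does not suffice.

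Third, and most seriously, your plan would only produce a transfer function $w$ of class $\mathscr C^1$. The hypothesis is that $F$ is a $\mathscr C^1$-diffeomorphism, so even granting that $F$ preserves each $\varphi^Z$-orbit (which itself does not follow merely from $F_*\widetilde Z\parallel Z$: $F$ could permute leaves), the resulting $w$ with $F(p)=\varphi^Z_{-w(p)}(p)$ inherits only the regularity of $F$. But the theorem asserts $w\in\mathscr C^\infty(\mathcal M)$. The paper bridges this by a different route: from $F_*\widetilde Z=aZ$ it derives, via careful bookkeeping on the differentials $DF$, $D\widetilde\varphi_t$, $D\varphi^Z_{at}$, that $\Lambda_t(q)-t=\int_0^t(\lambda^{-1}-1)\circ\widetilde\varphi_\tau(q)\,\diff\tau$ is uniformly bounded, applies the $L^2$-Gottschalk--Hedlund lemma to get $\lambda-1=Zw$ for some $w\in L^2(\mathcal M)$, and then invokes Wang's regularity theorem for the cohomological equation (\cite[Theorem 2.1]{wang}) plus Sobolev embedding to upgrade $w$ to $\mathscr C^\infty$. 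Without this final regularity input the theorem as stated cannot be concluded.
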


The proof of Theorem \ref{thm:conjug} is presented in \S\ref{s:pp7}. 

\begin{cor}\label{thm:cocrig}
Smoothly trivial perturbations $\widetilde{U}$ are in one-to-one correspondence with $(U,Z)$-cocycles $(f,g)$ of the form $f=Zw >-1$ and $g=-Uw$, for some $w \in \mathscr{C}^{\infty}(\mathcal{M})$.
\end{cor}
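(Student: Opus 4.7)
The plan is to derive the corollary by chaining Theorem~\ref{thm:conjug} with Lemma~\ref{thm:trivialtrivial}. Theorem~\ref{thm:conjug} parametrizes smoothly trivial perturbations by smooth functions $w$ (with $Zw>-1$) via $\beta=\widetilde{U}w$, while Lemma~\ref{thm:trivialtrivial} translates measure-preserving perturbations into $(U,Z)$-cocycles through the explicit formulas $f=\lambda-1$ and $g=\lambda\beta-\int_{\mathcal{M}}\lambda\beta\,\omega$. So the proof reduces to exhibiting the cocycle $(f,g)$ attached to such a trivial perturbation and, conversely, reconstructing the trivial perturbation from a cocycle of the claimed shape.

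For the forward direction, I would first identify the invariant density. The natural candidate is $\lambda=1+Zw$, which is positive exactly under the hypothesis $Zw>-1$. To verify the invariance equation $\widetilde{U}\lambda+\lambda Z\beta=0$, I would use the bracket identity $[\widetilde{U},Z]=-(Z\beta)Z$---immediate from $[U,Z]=0$ and a one-line Leibniz calculation---together with the defining relation $\widetilde{U}w=\beta$: substitution expresses $\widetilde{U}(Zw)$ in terms of $Z\beta$ and $Zw$, and the invariance equation reduces to an algebraic cancellation (modulo sign bookkeeping for $w$). This immediately gives $f=\lambda-1=Zw$.

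The second piece of data is $g$. Expanding $\widetilde{U}w=Uw+\beta\,(Zw)$ and solving algebraically yields $\lambda\beta=-Uw$. Since $\{\varphi^U_t\}_{t\in\R}$ preserves the Haar measure, $\int_{\mathcal{M}} Uw\,\omega=0$, so the mean-zero correction in $g=\lambda\beta-\int\lambda\beta\,\omega$ is trivially zero, and $g=-Uw$.

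The reverse direction proceeds in parallel: given a cocycle $(f,g)=(Zw,-Uw)$ with $Zw>-1$, set $\lambda=1+Zw>0$ and let $\beta$ be the function produced by the inverse map of Lemma~\ref{thm:trivialtrivial}, namely $\beta=(g+c)/\lambda$ with $c\in\R$ fixed by $\int_{\mathcal{M}}\beta\,\omega=0$. The identity $\int Uw\,\omega=0$ forces the normalizing constant $c$ to vanish, so $\beta$ equals $\widetilde{U}w$, and Theorem~\ref{thm:conjug} then guarantees that $\widetilde{U}$ is smoothly trivial. The only real obstacle is sign bookkeeping in the verification of the formula for $\lambda$; once the bracket identity is in hand, all remaining computations are short and mechanical.
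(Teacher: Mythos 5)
Your overall strategy matches the paper's exactly: both proofs are obtained by composing Theorem~\ref{thm:conjug} with the bijection of Lemma~\ref{thm:trivialtrivial}. However, two of the concrete steps you wave at do not survive scrutiny.

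First, the algebra $\lambda\beta=-Uw$ does \emph{not} follow from $\beta=\widetilde{U}w$. If $\beta=\widetilde{U}w=Uw+\beta\,Zw$ then $\beta(1-Zw)=Uw$, so with $\lambda=1+Zw$ one gets $\lambda\beta=\frac{(1+Zw)Uw}{1-Zw}$, not $-Uw$. The computation, and the whole corollary, require the opposite sign: $\beta=\widetilde{U}(-w)=-Uw-\beta\,Zw$, giving $\beta(1+Zw)=-Uw$ and hence $\lambda\beta=-Uw$. This is in fact what the paper uses in its proof of the corollary (and throughout \S\ref{s:pp7}), so the sign in the \emph{statement} of Theorem~\ref{thm:conjug} is a typo that you inherited; your "modulo sign bookkeeping" caveat is pointing at a real discrepancy, but a proof needs to resolve it, not bracket it. The same sign flip is also what makes the invariance identity $\widetilde{U}\lambda+\lambda Z\beta=0$ cancel: with $\widetilde{U}w=-\beta$ one finds $\widetilde{U}(Zw)=Z(\widetilde{U}w)+[\widetilde{U},Z]w=-(Z\beta)(1+Zw)=-\lambda Z\beta$, while with $\widetilde{U}w=\beta$ the two terms add instead of cancel.

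Second, your justification in the reverse direction that "the identity $\int_{\mathcal{M}}Uw\,\omega=0$ forces the normalizing constant $c$ to vanish" is a non sequitur. With $g=-Uw$ and $\lambda=1+Zw$, the constant $c$ in Lemma~\ref{thm:trivialtrivial} is chosen so that $\int_{\mathcal{M}}\frac{-Uw+c}{1+Zw}\,\omega=0$, i.e.\ it is determined by $\int_{\mathcal{M}}\frac{Uw}{1+Zw}\,\omega$, which is a different integral from $\int_{\mathcal{M}}Uw\,\omega$ and does not obviously vanish. You do not actually need $c=0$ to prove the corollary, though: the forward computation $(\lambda,\beta)\mapsto(Zw,-Uw)$ (where $\int_{\mathcal{M}}\lambda\beta\,\omega=-\int_{\mathcal{M}}Uw\,\omega=0$ \emph{is} the relevant vanishing) together with the fact that Lemma~\ref{thm:trivialtrivial} is a bijection already identifies the image of the smoothly trivial perturbations with the cocycles of the stated form; this is how the paper closes the argument.
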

\begin{proof}
By Theorem \ref{thm:conjug}, $\widetilde{U}$ is smoothly trivial if and only if there exists $w \in \mathscr{C}^{\infty}(\mathcal{M})$, with $Zw >-1$, such that $\beta = \widetilde{U}(-w) = -Uw - \beta Zw$, if and only if $\beta = -Uw/(1+Zw)$.
By \eqref{eq:vol}, $\widetilde{U}$ preserves the smooth measure with density $\lambda = 1+ Zw$.
By Lemma \ref{thm:trivialtrivial}, the pair $(\lambda, \beta)$ uniquely defines the smooth $(U,Z)$-cocycle $(f,g)$, where $f = \lambda -1 = Zw$ and $g =\lambda \beta - \int_{\mathcal{M}} \lambda \beta\ \omega = -Uw$.
\end{proof}
In view of Corollary \ref{thm:cocrig}, in order to ensure the existence of perturbations $\widetilde{U}$ which are not smoothly conjugate to the original unipotent flow, we need to address the cohomological problem of establishing whether all the solutions to \eqref{eq:ufzg} arise from a common smooth function $w$ or not.  
We say that the action of the commuting vector fields $U$ and $Z$ is \newword{cocycle rigid} if the following holds
\[
\text{if $(f,g)$ is a solution to \eqref{eq:ufzg}, then there exists $w$ such that $f=Zw$ and $g=-Uw$.} \label{eq:CR} \tag{CR}
\]
The question of cocycle rigidity (and related problems) on homogenous spaces has been investigated by several authors in different settings, including, among others, Damjanovic and Katok \cite{damjanovic}, Katok and Spatzier \cite{spatzier} for partially hyperbolic actions, and by Flaminio and Forni \cite{flaminioforni}, Mieczkowski \cite{miec}, Ramirez \cite{ramirez}, and Wang \cite{wang} for parabolic actions.  
For one-dimensional actions, Flaminio, Forni and Rodriguez-Hertz \cite{FFRH} showed that any homogeneous flow has infinitely many linearly independent obstructions to cocycle trivialization, unless it is smoothly isomorphic to a Diophantine linear flow on a torus. 

It turns out that, in general, cocycle rigidity for SL$(3,\R)$ \newword{fails}: Wang showed that, for example, for $U=E_{1,2}$ and some lattice $\Gamma \leqslant G$, there are smooth functions $f,g \in \mathscr{C}^{\infty}(\mathcal{M})$ such that \eqref{eq:ufzg} is satisfied, but the equations $f=Zw$ and $g=-Uw$ have no common solution in $L^2(\mathcal{M})$ (and hence in $\mathscr{C}^{\infty}(\mathcal{M})$), see Theorems 2.5, 2.6 and Remark 2.7 in \cite{wang}. In particular, in our case, there exist perturbations $\widetilde{U}$ that satisfy the assumption of Theorem \ref{thm:1}, and hence are parabolic and mixing, but, by Corollary \ref{thm:cocrig}, are not smoothly trivial, i.e.~they are not $\mathscr{C}^1$-conjugated to the unperturbed homogeneous flow.

\begin{remark}
The problem of establishing whether there exists a measurable isomorphism conjugating $\{ \widetilde{h}_t\}_{t \in \R}$ with $\{ \varphi^U_t \}_{t \in \R}$ remains open, but appears to be a difficult question. 
Indeed, we remark that, in the simpler case of time-changes, the existence of time-changes of the classical horocycle flow which are not measurably conjugated to the horocycle flow itself follows from the rigidity theorem of Ratner \cite{ratner0} and deep results on the classification of invariant distributions and on the deviations from the ergodic averages proved by Flaminio and Forni \cite{flaminioforni}, see, e.g., \cite[\S1]{forniulcigrai:timechanges}.
\end{remark}

%%%%%%%%%%%%%%%%%%%%
%%%%%%%%%%%%%%%%%%%%
%%%%%%%%%%%%%%%%%%%%

\section{Computation of the push-forwards}\label{s:ODE}

In this section, we compute the push-forward $(\widetilde{h}_t)_{\ast}(W)$ of a left-invariant vector field $W \in \lietre$ via $\widetilde{h}_t$. We recall that the Lie derivative of the vector field $W$ with respect to the vector field $V$ is defined by
\begin{equation}\label{eq:Liebr}
\left( \mathscr{L}_V(W) \right)_p = \frac{\diff}{\diff t} \Big\rvert_{t=0} (\varphi^V_{-t})_{\ast} W_{\varphi^V_t(p)} = \lim_{t\to 0}\frac{(\varphi^V_{-t})_{\ast} W_{\varphi^V_t(p)}  - W_p }{t},
\end{equation}
and coincides with the Lie brackets $[V,W]_p$.

In general, let us write
$$
(\widetilde{h}_t)_{\ast}(W) = \sum_{V \in \mathscr{B}} a_V(t) V
$$
for some functions $a_V(t) \colon \mathcal{M} \to \R$. We remark that 
\begin{equation}\label{eq:1}
\frac{\diff}{\diff t} (a_V(t) \circ \widetilde{h}_t) = \frac{\diff a_V(t)}{\diff t} \circ \widetilde{h}_t + \widetilde{U}a_V(t) \circ \widetilde{h}_t.
\end{equation}
On one hand
\begin{equation}\label{eq:compare1}
\frac{\diff}{\diff t}(\widetilde{h}_t)_{\ast}(W) = \sum_{V \in \mathscr{B}}\frac{\diff}{\diff t} a_V(t) V,
\end{equation}
but also
$$
(\widetilde{h}_{t+s})_{\ast}(W) = \sum_{V \in \mathscr{B}} (a_V(t) \circ \widetilde{h}_{-s} ) (\widetilde{h}_s)_{\ast}(V),
$$
so that, differentiating w.r.t.~$s$ at $s=0$ and by \eqref{eq:Liebr}, we get 
\begin{equation}
\begin{split}
\frac{\diff}{\diff t}(\widetilde{h}_t)_{\ast}(W) &= \sum_{V \in \mathscr{B}} \left( -(\widetilde{U}a_V(t))V + a_V(t) \frac{\diff}{\diff s} \Big\rvert_{s=0} (\widetilde{h}_s)_{\ast}(V) \right) \\
&= \sum_{V \in \mathscr{B}} \left( -(\widetilde{U}a_V(t))V - a_V(t) [ \widetilde{U}, V] \right). \label{eq:compare2}
\end{split}
\end{equation}
Equating the two expressions \eqref{eq:compare1} and \eqref{eq:compare2}, and using \eqref{eq:1}, we obtain
\begin{equation}\label{eq:system}
\sum_{V \in \mathscr{B}} \frac{\diff}{\diff t} (a_V(t) \circ \widetilde{h}_t) V \circ \widetilde{h}_t = \sum_{V \in \mathscr{B}} - (a_V(t) \circ \widetilde{h}_t)  [ \widetilde{U}, V] \circ \widetilde{h}_t,
\end{equation}
which is a system of ODEs. 

\begin{prop}\label{parabolic}
Under the assumption of Theorem \ref{thm:1}, we have that  $\lVert D \widetilde{h}_t \rVert_{\infty}=O(|t|^4)$; hence the flow $\{ \widetilde{h}_t \}_{t \in \R}$ is parabolic (in the sense of Definition \ref{def1}).
\end{prop}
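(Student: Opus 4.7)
The plan is to convert the system~\eqref{eq:system} into a linear ODE along the orbit and then solve it row by row, after choosing a clever ordering of the frame~$\mathscr{B}$. Setting $b_V(t)(p) = a_V(t) \circ \widetilde{h}_t(p)$ and expanding each bracket as $[\widetilde{U}, V] = \sum_{V' \in \mathscr{B}} c_{V, V'} V'$ with $c_{V, V'} \in \mathscr{C}^1(\mathcal{M})$, equation~\eqref{eq:system} takes the form
\begin{equation*}
\mathbf{b}'(t) = -C\bigl(\widetilde{h}_t(p)\bigr)^{\top} \mathbf{b}(t), \qquad \mathbf{b}(0) = e_W,
\end{equation*}
where $C(q) = (c_{V, V'}(q))_{V, V'}$ and $e_W$ is the standard basis vector corresponding to $W \in \mathscr{B}$. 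Since $(D\widetilde{h}_t)_p$ is represented in the frame~$\mathscr{B}$ by the matrix of entries $b_V(t)(p)$ as $W$ varies, its operator norm is controlled by $\max_{V,W}\lvert b_V(t)(p)\rvert$, so it suffices to bound each $b_V(t)$ by a polynomial of degree at most $4$ in $\lvert t\rvert$, uniformly in $p$.

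The first step is to compute all brackets $[\widetilde{U}, V] = [U, V] - (V\beta)\,Z + \beta\,[Z, V]$ using the commutation rules in $\lietre$. Re-ordering $\mathscr{B}$ according to the height of the corresponding $\mathfrak{a}$-weight,
\begin{equation*}
E_{3,1} \prec E_{2,1},\, E_{3,2} \prec \tfrac{1}{2}(E_{1,1} - E_{2,2}),\, \tfrac{1}{2}(E_{2,2} - E_{3,3}) \prec E_{1,2},\, E_{2,3} \prec Z,
\end{equation*}
one checks that $\mathrm{ad}(\widetilde{U})$ strictly raises the weight; hence $C$ is upper triangular in this order, and since $\widetilde{U}$ has no component of non-positive weight, the diagonal of $C$ vanishes except for $c_{Z,Z} = -Z\beta$. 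Consequently $-C^{\top}$ is strictly lower triangular outside the slot $(Z, Z)$, and the system integrates row by row. Each such integration multiplies the previous bound by $\lvert t\rvert$ times a constant depending only on $\lVert\beta\rVert_{\mathscr{C}^1}$, giving successive estimates $b_V(t) = O(\lvert t\rvert^{k(V)})$ with $k(E_{3,1})=0$, $k(E_{2,1}), k(E_{3,2}) \leq 1$, $k \leq 2$ for the two $\mathfrak{a}$-basis elements, and $k(E_{1,2}), k(E_{2,3}) \leq 3$, uniformly in $p$ and in the initial vector $W$.

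The hard part will be the final equation, for $b_Z$, which has the form
\begin{equation*}
b_Z'(t) = F_W(t) + Z\beta\bigl(\widetilde{h}_t(p)\bigr)\, b_Z(t),
\end{equation*}
where $F_W(t)$ is a bounded-coefficient linear combination of the earlier $b_V(t)$; in the worst case $W = E_{3,1}$ it satisfies $F_W(t) = O(\lvert t\rvert^3)$. The variation-of-constants formula introduces the multiplicative factor $\exp\!\bigl(\int_0^t Z\beta(\widetilde{h}_s(p))\, \diff s\bigr)$, which a priori could blow up exponentially, and this is exactly the point where the invariance hypothesis of Theorem~\ref{thm:1} enters crucially. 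From~\eqref{eq:vol} one has $Z\beta = -\widetilde{U}(\log \lambda)$, whence
\begin{equation*}
\phi(t) := \int_0^t Z\beta\bigl(\widetilde{h}_s(p)\bigr)\, \diff s = \log\lambda(p) - \log\lambda\bigl(\widetilde{h}_t(p)\bigr)
\end{equation*}
is a telescoping Birkhoff integral, uniformly bounded by $2\lVert \log\lambda\rVert_{\infty}$ on the compact manifold~$\mathcal{M}$. Hence both $e^{\pm \phi(t)}$ are uniformly bounded, and variation of constants yields $b_Z(t) = O(\lvert t\rvert^4)$. Assembling the estimates over all $V$ and all initial $W \in \mathscr{B}$ produces $\lVert D \widetilde{h}_t \rVert_{\infty} = O(\lvert t\rvert^4)$, and hence the parabolicity claim in the sense of Definition~\ref{def1}.
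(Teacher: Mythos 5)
Your argument is correct and follows essentially the same route as the paper: exploit the triangular structure of the system~\eqref{eq:system} coming from the nilpotency of $\mathfrak{ad}_U$ and $\mathfrak{ad}_Z$, solve by substitution to get $O(|t|^3)$ for all components other than $Z$, and then observe that the integrating factor for the $Z$-equation is $\exp\bigl(\int_0^t Z\beta \circ \widetilde{h}_\tau\, \diff\tau\bigr) = \lambda / (\lambda\circ\widetilde{h}_t)$, which is uniformly bounded thanks to~\eqref{eq:vol}. Your weight-ordering of $\mathscr{B}$ and the explicit degree count $k(V)$ make the triangularity and the successive bounds a bit more transparent, but the substance is the same as the paper's proof.
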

\begin{proof}
By definition, we have that $ [ \widetilde{U}, V] = [U,V] + \beta [Z,V] - (V\beta) Z$ for all $V \in \mathscr{B}$, where $\mathscr{B}$ is the frame chosen in \eqref{eq:thebasis}. Since $U, Z \in \mathfrak{n}$, the operators $\mathfrak{ad}_U = [U, \cdot]$ and $\mathfrak{ad}_Z = [Z, \cdot]$ are nilpotent and in triangular form w.r.t.~the basis~$\mathscr{B}$. The system \eqref{eq:system} is therefore in triangular form and can be solved by substitutions. In particular, for all $V \in \mathscr{B} \setminus \{Z\}$, one can check that the solutions $a_V(t)$ exhibit a polynomial growth in $t$ of order at most $O(|t|^3)$. The only nontrivial equation is in the $Z$-component
$$
\frac{\diff}{\diff t} (a_{Z}(t) \circ \widetilde{h}_t) = (Z \beta \circ \widetilde{h}_t) a_Z(t) \circ \widetilde{h}_t + \alpha(t) \circ \widetilde{h}_t,
$$
for some explicit function $\alpha(t) = O(|t|^3)$. The solution is
$$
a_{Z}(t) \circ \widetilde{h}_t =\exp \Big( \int_0^t Z\beta  \circ \widetilde{h}_{\tau} \diff \tau \Big) \Bigg( \int_0^t (\alpha(\tau)\circ \widetilde{h}_{\tau}) \exp \Big( - \int_0^{\tau} Z\beta  \circ \widetilde{h}_{s} \diff s \Big) \diff \tau + \text{const} \Bigg).
$$
Equation \eqref{eq:vol} can be rewritten as $Z\beta = -\widetilde{U} \log \lambda$; therefore the exponential factor above becomes
$$
\exp \Big( \int_0^t Z\beta  \circ \widetilde{h}_{\tau} \diff \tau \Big) = \exp \Big( \int_0^t \widetilde{U} \log (\lambda^{-1})  \circ \widetilde{h}_{\tau} \diff \tau \Big) = \frac{\lambda}{\lambda \circ \widetilde{h}_t}, 
$$
which implies that $a_Z(t)$ is of order at most $O(|t|^4)$. 
\end{proof}

Recall that there exists $W \in \mathfrak{n} \cap \mathscr{B}$ such that $[U,W]= -c Z$ for some $c \neq 0$.
We are interested in its push-forward.
We have that 
$$
[ \widetilde{U}, W] = [U,W] + \beta[Z,W] - (W\beta) Z = -(c+ W\beta) Z, \text{\ \ \ and\ \ \ } [ \widetilde{U}, Z] = -(Z\beta) Z.
$$
Thus, the system of equations \eqref{eq:system} with the only non zero initial condition $a_W(0) \not\equiv 0$ reduces to a single equation
$$
\frac{\diff}{\diff t} (a_{Z}(t) \circ \widetilde{h}_t) = (Z \beta \circ \widetilde{h}_t) a_Z(t) \circ \widetilde{h}_t + (c + W \beta)\circ \widetilde{h}_t,
$$
whose solution is
$$
a_Z(t) \circ \widetilde{h}_t = \frac{1}{\lambda \circ \widetilde{h}_t} \int_0^t (\lambda \cdot (c+ W\beta))  \circ \widetilde{h}_{\tau} \diff \tau.
$$
Therefore,
\begin{equation}\label{eq:solw}
(\widetilde{h}_t)_{\ast}(W) = W + \Big(\frac{1}{\lambda} \int_{-t}^0 (\lambda \cdot (c + W\beta))  \circ \widetilde{h}_{\tau} \diff \tau\Big) Z.
\end{equation}
Finally, for the push-forward of $Z$, we get
\begin{equation}\label{eq:solz}
(\widetilde{h}_t)_{\ast}(Z) =  \frac{\lambda \circ \widetilde{h}_{-t}}{\lambda} Z.
\end{equation}

%%%%%%%%%%%%%%%%%%%%
%%%%%%%%%%%%%%%%%%%%
%%%%%%%%%%%%%%%%%%%%

\section{Ergodicity and mixing}\label{s:ergodicity}

In this section, under the assumption of Theorem \ref{thm:1}, we prove that the flow  $\{ \widetilde{h}_t \}_{t \in \R}$ is ergodic and, from this, we will deduce it is mixing. Ergodicity is established using a smooth version of Mautner Phenomenon for homogeneous flows. The proof of mixing follows the same ideas as in \cite{forniulcigrai:timechanges} by Forni and Ulcigrai for the case of time-changes; however, their bootstrap argument appears not to be generalizable to our setting, and for this reason the nature of the spectrum of the flow $\{ \widetilde{h}_t \}_{t \in \R}$ remains an open question.

Fix $\sigma > 0$ and, for each $p \in \mathcal{M}$, consider the family of curves
$$
\mathscr{F}_p = \big\{ \{\varphi^{(t)}_s(p)\}_{s \in [0,\sigma]} : t \geq 1\big\}, \text{\ \ \ where\ \ \ } \varphi^{(t)}_s(p) = (\widetilde{h}_{t} \circ \varphi^{\frac{1}{t}W}_s \circ \widetilde{h}_{-t})(p).
$$
The curves $\varphi^{(t)}_s(p)$ for $s \in [0,\sigma]$ start at $p$ and are obtained by pushing segments in direction $W$ of length $\sigma / t$, for $t \geq 1$, via $\widetilde{h}_{t}$.

By the chain rule and equation \eqref{eq:solw}, the vector field inducing $\varphi^{(t)}_s$ is given by
\begin{equation}\label{eq:tgtvect}
\frac{\diff}{\diff s}\Big\rvert_{s=0}(\widetilde{h}_{t} \circ \varphi^{\frac{1}{t}W}_s \circ \widetilde{h}_{-t})(p) = D\widetilde{h}_{t}\Big\rvert_{ \widetilde{h}_{-t}} \Big(\Big( \frac{1}{t} W\Big) \circ \widetilde{h}_{-t}\Big)(p) = (\widetilde{h}_{t})_{\ast}\Big(\frac{1}{t}W\Big) (p) = \frac{1}{t}W + \frac{\ell_t(p)}{\lambda(p)}Z,
\end{equation}
where 
\begin{equation}\label{eq:defvt}
\ell_t(p) = \frac{1}{t}\int_{-t}^0 ( \lambda \cdot (c + W \beta))\circ\widetilde{h}_{\tau}(p) \diff \tau.
\end{equation}
By Birkhoff Theorem, there exists $\ell \in L^1(\mathcal{M})$ such that $\ell_t(p) \to \ell(p)$ for almost every $p \in \mathcal{M}$. 
We remark that, by our assumption \eqref{eq:conditionbeta}, the functions $\ell_t$ are uniformly bounded away from 0, so that $\ell(p) \neq 0$ almost everywhere.

Recall that we defined the time-change $\widetilde{Z} = (1/\lambda) Z$ of $Z$ which commutes with $\widetilde{U}$, see Remark~\ref{rk:comm}.
\begin{prop}\label{thm:ell}
The function $\ell(p)$ is $\varphi^Z_t$-invariant, hence equal to a constant $\ell \neq 0$ almost everywhere. Moreover, for almost every $p \in \mathcal{M}$, we have $\varphi^{(t)}_s(p) \to \varphi^{\ell \widetilde{Z}}_s(p)$ for all $s \in [0,\sigma]$.
\end{prop}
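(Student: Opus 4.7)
The plan is to first show that $\ell$ is $\varphi^Z_t$-invariant and, by Howe--Moore ergodicity of $\varphi^Z_t$, deduce that $\ell$ is a.e.\ equal to a constant; the curve-convergence statement will then follow from a shearing estimate. Since $\widetilde{Z} = (1/\lambda)Z$ and $Z$ share the same orbits, the required $\varphi^Z_t$-invariance is equivalent to $\varphi^{\widetilde{Z}}_r$-invariance, and Birkhoff's ergodic theorem applied to $\{\widetilde h_t\}$ on $(\mathcal{M}, \lambda\omega)$ already gives $\widetilde h_t$-invariance of $\ell$.

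The main step, and the main technical obstacle, is the algebraic identity
$$
\widetilde{Z}F = -\widetilde{U}\bigl(W\log\lambda\bigr), \qquad \text{where } F = \lambda(c+W\beta),
$$
which exhibits $\widetilde{Z}F$ as an $\widetilde{U}$-coboundary. I would verify this by direct computation using \eqref{eq:vol} in the form $Z\beta = -\widetilde{U}\log\lambda$, the commutator $[\widetilde U, W] = -(c+W\beta)Z$, and the centrality relation $[Z,W]=0$ (since $Z \in \mathfrak{z}(\mathfrak n)$): one side gives $\widetilde Z F = (Z\log\lambda)(c+W\beta) - W\widetilde U\log\lambda$, and the other side, by expanding $\widetilde U (W\log\lambda) = W\widetilde U\log\lambda + [\widetilde U, W]\log\lambda$, gives the same expression. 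Substituting into the definition of $\widetilde{Z}\ell_t$ telescopes the integral, yielding
$$
\widetilde Z \ell_t = \frac{1}{t}\bigl(W\log\lambda \circ \widetilde h_{-t} - W\log\lambda\bigr),
$$
which tends to $0$ uniformly on $\mathcal{M}$. Since $\varphi^{\widetilde{Z}}_r$ preserves $\lambda\omega$ (as $d(Z\lrcorner\omega)=0$), integration by parts against any test function $\phi \in \mathscr{C}^{\infty}(\mathcal M)$ together with $L^1$-convergence $\ell_t \to \ell$ yields $\widetilde{Z}\ell = 0$ distributionally. Howe--Moore ergodicity of $\varphi^Z_t$ then forces $\ell$ to be a.e.\ equal to a constant, and \eqref{eq:conditionbeta} guarantees that this constant is nonzero, since $c+W\beta$ is bounded away from zero with constant sign.

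For the convergence of curves, the same coboundary trick controls the forward Birkhoff average $\ell_t^+(q) := \frac{1}{t}\int_0^t F\circ\widetilde h_\tau(q)\diff\tau$ under $W$-differentiation: the seemingly dangerous term $\frac{1}{t}\int_0^t a_Z(\tau)(ZF)\circ\widetilde h_\tau \diff\tau$ in $W\ell_t^+$ becomes bounded after an integration by parts using $ZF = -\lambda\,\widetilde U(W\log\lambda)$, yielding $\|W\ell_t^+\|_\infty = O(1)$ uniformly in $t$. Since $\ell_t(\varphi^{(t)}_s(p)) = \ell_t^+(\varphi^W_{s/t}\widetilde h_{-t}(p))$ and the $W$-arc of length $s/t \leq \sigma/t$ changes $\ell_t^+$ by at most $O(\sigma/t)$, and since $\ell_t^+(\widetilde h_{-t}(p)) = \ell_t(p) \to \ell$ for a.e.\ $p$, we obtain $\sup_{s \in [0,\sigma]}|\ell_t(\varphi^{(t)}_s(p)) - \ell| \to 0$ for a.e.\ $p$. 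The tangent vector \eqref{eq:tgtvect} of $\varphi^{(t)}_s$ thus converges uniformly along the trajectory to $\ell \widetilde{Z}$, and a standard Gronwall argument comparing the ODE for $\varphi^{(t)}_s$ to that for $\varphi^{\ell\widetilde Z}_s$ yields $\varphi^{(t)}_s(p) \to \varphi^{\ell\widetilde{Z}}_s(p)$ uniformly in $s \in [0,\sigma]$, for a.e.\ $p$.
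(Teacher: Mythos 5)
Your proposal is correct, and it takes a genuinely different and arguably sharper route than the paper's own proof of Proposition~\ref{thm:ell}. The paper proceeds by soft compactness: it establishes equi-Lipschitz bounds (the crude estimates $|Z\ell_t|=O(1)$ and $|W\ell_t|=O(t)$ of Lemma~\ref{thm:Zft}), extracts Ascoli--Arzel\`a limits $\psi_s$ of the curves $\varphi^{(t)}_s$, proves a geometric commutation lemma (Lemma~\ref{thm:extend}) via push-forwards of $\widetilde Z$ under $\varphi^{(t)}_s$, identifies the tangent vector of any limit curve by a Stokes-theorem argument (Lemma~\ref{thm:llambda}), and finally differentiates the commutation identity $\psi_s=\varphi^{\widetilde Z}_r\circ\psi_s\circ\varphi^{\widetilde Z}_{-r}$ to force $Z\ell=0$. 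Your argument instead isolates a single algebraic identity, $\widetilde Z\bigl(\lambda(c+W\beta)\bigr)=-\widetilde U(W\log\lambda)$, which follows from \eqref{eq:vol} in the form $Z\beta=-\widetilde U\log\lambda$, from $[\widetilde U,W]=-(c+W\beta)Z$, and from $[Z,W]=0$; I checked this identity and it is correct. Because $\widetilde Z$ commutes with $\widetilde h_t$, this exhibits the integrand defining $\widetilde Z\ell_t$ as an exact time-derivative, and the telescoped expression $\widetilde Z\ell_t=\tfrac1t\bigl(W\log\lambda\circ\widetilde h_{-t}-W\log\lambda\bigr)$ goes to zero \emph{uniformly}, directly yielding $\widetilde Z\ell=0$ after a distributional integration by parts; there is no need for subsequences. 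The same coboundary trick with one further integration by parts gives $\|W\ell_t^+\|_\infty=O(1)$, strictly sharper than the paper's $O(t)$, which is what lets you control $\ell_t$ along the sheared curve and close with a Gronwall comparison rather than with compactness-plus-uniqueness-of-limits. The trade-offs are: your approach is more quantitative and more elementary (no Ascoli--Arzel\`a, no Stokes, no auxiliary geometric Lemma~\ref{thm:extend}), but it leans on the specific algebraic cancellation encoded in the coboundary identity; the paper's soft argument only uses the crude polynomial bounds of Lemma~\ref{thm:Zft} and is therefore closer in spirit to the geometric shearing philosophy emphasised elsewhere in the paper. Both are valid proofs of the Proposition.
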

The proof of the Proposition \ref{thm:ell} is postponed to \S\ref{s:lemmas}. 

\begin{prop}\label{ergodic}
The flow $\{ \widetilde{h}_t \}_{t \in \R}$ is ergodic.
\end{prop}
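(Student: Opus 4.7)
The plan is to upgrade the $\widetilde{h}_t$-invariance of any $L^2$-function to invariance under the commuting flow $\{\varphi^{\widetilde{Z}}_r\}_{r\in\R}$, and then to conclude via the Howe--Moore theorem. This is a smooth Mautner phenomenon, with Proposition~\ref{thm:ell} playing the role of the classical commutator identity $a_{t}u_{s/t}a_{-t}\to z_{\ell s}$.

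Let $f\in L^2(\widetilde{\omega})$ satisfy $f\circ\widetilde{h}_t=f$ for all $t\in\R$. Fix a continuous test function $\phi\in\mathscr{C}^0(\mathcal{M})$ and $s\in[0,\sigma]$. I will estimate the scalar product $\langle f,\phi\circ\varphi^{(t)}_s\rangle_{\widetilde{\omega}}$ in two different ways as $t\to\infty$. On one side, Proposition~\ref{thm:ell} gives $\varphi^{(t)}_s(p)\to\varphi^{\ell\widetilde{Z}}_s(p)$ for a.e.~$p$; since $\phi$ is bounded and $f\in L^1(\widetilde{\omega})$, dominated convergence yields
\[
\langle f,\,\phi\circ\varphi^{(t)}_s\rangle_{\widetilde{\omega}}\longrightarrow\langle f,\,\phi\circ\varphi^{\ell\widetilde{Z}}_s\rangle_{\widetilde{\omega}}.
\]
On the other side, unfolding $\varphi^{(t)}_s=\widetilde{h}_t\circ\varphi^{W/t}_s\circ\widetilde{h}_{-t}$ and using that $\widetilde{h}_t$ preserves $\widetilde{\omega}$ and fixes $f$, the change of variables $p=\widetilde{h}_t(q)$ reduces the same scalar product to $\langle f,\,(\phi\circ\widetilde{h}_t)\circ\varphi^{W/t}_s\rangle_{\widetilde{\omega}}$. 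A second change of variables $q'=\varphi^{W/t}_s(q)$, whose Radon--Nikodym derivative with respect to $\widetilde{\omega}$ is $(\lambda\circ\varphi^{W/t}_{-s})/\lambda$ because $\varphi^{W}$ preserves Haar and $\widetilde{\omega}=\lambda\omega$, rewrites this as $\langle F_t,\,\phi\circ\widetilde{h}_t\rangle_{\widetilde{\omega}}$, where
\[
F_t:=(f\circ\varphi^{W/t}_{-s})\cdot\frac{\lambda\circ\varphi^{W/t}_{-s}}{\lambda}.
\]
Strong continuity of the right-regular $\sltre$-representation on $L^2$ applied to $f$, together with the smoothness of $\lambda$, gives $F_t\to f$ in $L^2(\widetilde{\omega})$ as $t\to\infty$. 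Since $\|\phi\circ\widetilde{h}_t\|_{L^2(\widetilde{\omega})}=\|\phi\|_{L^2(\widetilde{\omega})}$ by measure invariance, Cauchy--Schwarz combined with the $\widetilde{h}$-invariance of $f$ yields $\langle F_t,\,\phi\circ\widetilde{h}_t\rangle_{\widetilde{\omega}}\to\langle f,\phi\rangle_{\widetilde{\omega}}$.

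Equating the two limits produces $\langle f,\,\phi\circ\varphi^{\ell\widetilde{Z}}_s\rangle_{\widetilde{\omega}}=\langle f,\phi\rangle_{\widetilde{\omega}}$ for every $\phi\in\mathscr{C}^0(\mathcal{M})$ and every $s\in[0,\sigma]$. Density of continuous functions in $L^2(\widetilde{\omega})$ (note $\varphi^{\widetilde{Z}}_r$ preserves $\widetilde{\omega}$, as $\diff(\lambda\widetilde{Z}\lrcorner\omega)=\diff(Z\lrcorner\omega)=0$) upgrades this to $f\circ\varphi^{\widetilde{Z}}_r=f$ in $L^2(\widetilde{\omega})$ first for $r\in[0,\ell\sigma]$, and then for every $r\in\R$ by the group law. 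Because $\widetilde{Z}=(1/\lambda)Z$ is a smooth time-change of $Z$ with identical orbits, $f$ is also $\varphi^{Z}$-invariant. Finally, $\{\exp(rZ):r\in\R\}$ is a non-compact one-parameter subgroup of $\sltre$, so the Howe--Moore theorem recalled in Section~\ref{s:preliminaries} forces $f$ to be constant $\omega$-a.e., hence $\widetilde{\omega}$-a.e., and ergodicity follows.

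The delicate point I expect to be the main obstacle is the second computation, where one must handle carefully the two successive changes of variables and exploit the $L^2$-continuity of the regular representation to neutralize the factor $f\circ\varphi^{W/t}_{-s}$, while the remaining factor $\phi\circ\widetilde{h}_t$ does \emph{not} converge pointwise: indeed $\widetilde{h}_t$ is parabolic and spreads mass throughout $\mathcal{M}$, and it is only its preserved $L^2$ norm that keeps the estimate under control. This is the feature that makes the Mautner-type argument in our non-homogeneous setting substantially more subtle than its purely algebraic counterpart.
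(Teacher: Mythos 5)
Your argument is correct and takes a genuinely different route from the paper's proof. Both hinge on Proposition~\ref{thm:ell} and the conjugation identity $\varphi^{(t)}_s=\widetilde{h}_t\circ\varphi^{W/t}_s\circ\widetilde{h}_{-t}$, but exploit it differently. The paper runs the classical Mautner self-pairing: for an invariant $g$ it shows $\|g\|_2^2=\lim_t\langle g\circ\varphi^{(t)}_s,g\rangle=\langle g\circ\varphi^{\ell\widetilde{Z}}_s,g\rangle\le\|g\|_2^2$, so the Cauchy--Schwarz equality case only gives $g\circ\varphi^{\ell\widetilde{Z}}_s=\xi\,g$ with $\xi=\pm1$; it then needs the $s/2$ trick to rule out $\xi=-1$, and finally invokes ergodicity of a fixed time-$s$ map $\varphi^{\ell\widetilde{Z}}_s$ (via the Pugh--Shub result that all but countably many time-$s$ maps of an ergodic flow are ergodic). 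You instead pair the invariant $f$ against an arbitrary continuous test function $\phi$ and evaluate $\langle f,\phi\circ\varphi^{(t)}_s\rangle$ in two ways --- dominated convergence on one side, a change of variables plus strong $L^2$-continuity of the regular representation on the other --- obtaining $f\circ\varphi^{\ell\widetilde{Z}}_s=f$ for every $s$ at once, which removes both the sign ambiguity and any appeal to ergodicity of individual time-$s$ maps. Your second computation is the twin of the paper's estimate~\eqref{eq:estimateMP}, except that you push the Jacobian factor onto $f$ (inside $F_t$) rather than onto $\lambda$ alone, which is exactly what lets the $\widetilde{h}_t$-invariance of $f$ absorb the shear. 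One small remark: the step from $\varphi^{\widetilde{Z}}$-invariance to $\varphi^Z$-invariance of the $L^2$-class $f$ is not entirely formal --- it rests on the standard fact that a flow and any time-change of it preserving an equivalent measure share the same invariant $\sigma$-algebra modulo null sets. Since this is precisely what is needed to assert that $\varphi^{\widetilde{Z}}$ is ergodic, you can skip the detour through $\varphi^Z$ and conclude directly from ergodicity of $\varphi^{\widetilde{Z}}$.
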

\begin{proof}
Let $s \in \R$.
Since the measures $\widetilde{\omega}=\lambda \omega$ and $\omega$ are equivalent, we have that $f \in  L^2(\mathcal{M},\omega)$ if and only if $f \in  L^2(\mathcal{M},\widetilde{\omega})$, and
$$
(\min \lambda) \norm{f}^2_{ L^2(\mathcal{M},\omega)} \leq \norm{f}^2_{ L^2(\mathcal{M},\widetilde{\omega})} \leq (\max \lambda) \norm{f}^2_{ L^2(\mathcal{M},\omega)}.
$$
In particular, it follows that for any $t \geq 1$, if $f \in  L^2(\mathcal{M},\widetilde{\omega})$, then $f \circ \varphi_s^{(t)} \in  L^2(\mathcal{M},\widetilde{\omega})$, and
$$
 \norm{ f \circ \varphi^{(t)}_s }_{L^2(\mathcal{M},\widetilde{\omega})}^2 = \norm{f\circ \widetilde{h}_t \circ \varphi^{\frac{1}{t}W}_s }^2_{L^2(\mathcal{M},\widetilde{\omega})} \leq (\max \lambda) \norm{f\circ \widetilde{h}_t}^2_{L^2(\mathcal{M},\omega)} \leq \frac{\max \lambda}{\min \lambda} \norm{f}^2_{L^2(\mathcal{M},\widetilde{\omega})}.
$$
Let $f \in  L^2(\mathcal{M},\widetilde{\omega})$ be a real-valued function. Then,
\begin{equation}\label{eq:estimateMP}
\begin{split}
& \modulo{ \int_{\mathcal{M}} f \circ \varphi^{(t)}_s \ \lambda\omega  -\int_{\mathcal{M}} f \ \lambda\omega} = \modulo{ \int_{\mathcal{M}} f \circ \widetilde{h}_t \circ \varphi^{\frac{1}{t}W}_s \circ \widetilde{h}_{-t} \ \lambda\omega -  \int_{\mathcal{M}} f \ \lambda\omega} \\
& \quad  = \modulo{ \int_{\mathcal{M}} f \circ \widetilde{h}_t \circ \varphi^{\frac{1}{t}W}_s\ \lambda\omega - \int_{\mathcal{M}} f\ \lambda\omega } = \modulo{  \int_{\mathcal{M}} (f \circ \widetilde{h}_t) \cdot (\lambda \circ \varphi^{\frac{1}{t}W}_{-s}) \   \omega - \int_{\mathcal{M}} (f \circ \widetilde{h}_t)\ \lambda\omega  } \\
& \quad \leq  \int_{\mathcal{M}} \modulo{f \circ \widetilde{h}_t} \cdot \modulo{\frac{\lambda \circ \varphi^{\frac{1}{t}W}_{-s} - \lambda}{\lambda} } \   \lambda\omega \leq  \norm{f}_1 \cdot \norm{ \frac{\lambda \circ \varphi^{\frac{1}{t}W}_{-s} - \lambda}{\lambda} }_{\infty} \to 0, \text{\ \ \ for } t \to \infty.
\end{split}
\end{equation}

By Proposition \ref{thm:ell}, for almost all $p \in \mathcal{M}$, we have that 
$$
\varphi^{(t)}_s(p) = \widetilde{h}_t \circ \varphi^{\frac{1}{t}W}_s \circ \widetilde{h}_{-t}(p) \to \varphi^{\ell \widetilde{Z}}_s(p).
$$ 
In particular, for any continuous function $f$, by  Lebesgue Theorem and \eqref{eq:estimateMP},
$$
\int_{\mathcal{M}} f \circ  \varphi^{\ell \widetilde{Z}}_s \ \widetilde{\omega} = \int_{\mathcal{M}} \lim_{t \to \infty} f \circ \varphi^{(t)}_s \ \widetilde{\omega} = \lim_{t \to \infty}  \int_{\mathcal{M}} f \circ \varphi^{(t)}_s \ \widetilde{\omega} = \int_{\mathcal{M}} f \ \widetilde{\omega},
$$
which implies that $ \varphi^{\ell \widetilde{Z}}_s$ preserves $\widetilde{\omega}$.
Therefore, by the density of continuous functions in $L^2(\mathcal{M},\widetilde{\omega})$ and the estimate \eqref{eq:estimateMP} above, it follows that $ \norm{ f \circ \varphi^{(t)}_s - f \circ \varphi^{\ell \widetilde{Z}}_s }_2~\to~0$ for all $f \in L^2(\mathcal{M},\widetilde{\omega})$.

The flow $\{ \varphi^{\ell \widetilde{Z}}_s\}_{s \in \R}$ is a time-change of $\{\varphi^Z_s\}_{s \in \R}$, hence is ergodic w.r.t.~$\widetilde{\omega}$. Let us fix $s \in \R$ such that the map $\varphi^{\ell \widetilde{Z}}_s \colon \mathcal{M} \to \mathcal{M}$ is ergodic (indeed, for all but at most countably many $s$, the time-$s$ map $\varphi^{\ell \widetilde{Z}}_s$ is ergodic, see \cite{PughShub}).

Let $g\in L^2(\mathcal{M},\widetilde{\omega})$ be a nonzero, real-valued, $\widetilde{h}_t$-invariant function. 
Cauchy-Schwarz inequality yields
\begin{equation*}
\begin{split}
\norm{g}_2^2 &= \lim_{t \to \infty} \langle g \circ \varphi^{\frac{1}{t}W}_s, g \rangle = \lim_{t \to \infty} \langle g \circ\widetilde{h}_t \circ \varphi^{\frac{1}{t}W}_s, g \circ \widetilde{h}_t \rangle = \lim_{t \to \infty} \langle g \circ \widetilde{h}_t \circ \varphi^{\frac{1}{t}W}_s \circ \widetilde{h}_{-t}, g \rangle  \\
&= \lim_{t \to \infty} \langle g \circ \varphi^{(t)}_s, g \rangle = \langle g \circ \varphi^{\ell \widetilde{Z}}_s, g \rangle \leq \norm{ g \circ \varphi^{\ell \widetilde{Z}}_s}_2 \norm{g}_2 = \norm{g}_2^2.
\end{split}
\end{equation*}
Since the equality holds, $g$ and $g \circ \varphi^{\ell \widetilde{Z}}_s$ are linearly dependent and so we must have $ \xi(s) g = g \circ \varphi^{\ell \widetilde{Z}}_s$ almost everywhere, where $\xi(s) = \pm 1$.
The same argument for $s/2$ gives us $\xi(s/2) g = g \circ \varphi^{\ell \widetilde{Z}}_{s/2}$ almost everywhere; from which we get
$$
\xi(s) g = g \circ \varphi^{\ell \widetilde{Z}}_s = (g \circ \varphi^{\ell \widetilde{Z}}_{s/2} ) \circ \varphi^{\ell \widetilde{Z}}_{s/2} = \xi(s/2) (g \circ \varphi^{\ell \widetilde{Z}}_{s/2}) = \xi(s/2)^2 g.
$$
This yields that $\xi(s) = 1$, i.e., $g$ is invariant under $\varphi^{\ell \widetilde{Z}}_s$. Since $s$ was chosen so that $\varphi^{\ell \widetilde{Z}}_s$ is ergodic, we conclude that $g$ is constant.
\end{proof}

We now show that ergodicity of $\{ \widetilde{h}_t \}_{t \in \R}$ implies it is mixing.
\begin{prop}\label{mixing}
The flow $\{ \widetilde{h}_t \}_{t \in \R}$ is mixing.
\end{prop}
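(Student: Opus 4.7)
The plan is to adapt the shearing-and-averaging argument used by Marcus \cite{marcus:horocycle} for the classical horocycle flow, along the general scheme of Forni and Ulcigrai \cite{forniulcigrai:timechanges}. The key input is the shearing \eqref{eq:solw}: short segments in direction $W$ are pushed by $\widetilde{h}_t$ into curves that, in the limit $t\to\infty$, are well-approximated by long orbits of the ergodic flow $\{\varphi^{\ell\widetilde{Z}}_s\}_{s\in\R}$, as provided by Proposition \ref{thm:ell}. By density of $\mathscr{C}^{\infty}(\mathcal{M})$ in $L^2(\mathcal{M},\widetilde{\omega})$ and the fact that $\widetilde{h}_t$ acts as an $L^2$-isometry, it suffices to prove that $\int_{\mathcal{M}} f\cdot(g\circ\widetilde{h}_t)\ \widetilde{\omega}\to 0$ as $t\to\infty$ for every pair of smooth functions $f,g$ with $\int_{\mathcal{M}} g\ \widetilde{\omega}=0$.

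Fix $\sigma>0$, to be chosen later. For each $s\in[0,\sigma]$, the intertwining identity $\widetilde{h}_t\circ\varphi^{\frac{1}{t}W}_s=\varphi^{(t)}_s\circ\widetilde{h}_t$, a change of variables by $\varphi^{\frac{1}{t}W}_{-s}$, and the uniform bounds $\norm{f\circ\varphi^{\frac{1}{t}W}_{-s}-f}_\infty=O(\sigma/t)$ together with a similar bound on the Jacobian of $\varphi^{\frac{1}{t}W}_{-s}$ relative to $\widetilde{\omega}=\lambda\omega$ (which follow from smoothness of $f$ and $\lambda$ and compactness of $\mathcal{M}$) yield
$$
\int_{\mathcal{M}} f\cdot(g\circ\widetilde{h}_t)\ \widetilde{\omega}=\int_{\mathcal{M}} f\cdot(g\circ\varphi^{(t)}_s\circ\widetilde{h}_t)\ \widetilde{\omega}+O_{f,g}(\sigma/t).
$$
Averaging in $s\in[0,\sigma]$ and using the $\widetilde{h}_t$-invariance of $\widetilde{\omega}$ gives
$$
\int_{\mathcal{M}} f\cdot(g\circ\widetilde{h}_t)\ \widetilde{\omega}=\int_{\mathcal{M}}(f\circ\widetilde{h}_{-t})\cdot G^t_\sigma\ \widetilde{\omega}+O_{f,g}(\sigma/t),
$$
where $G^t_\sigma:=\frac{1}{\sigma}\int_0^\sigma g\circ\varphi^{(t)}_s\ \diff s$.

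By Proposition \ref{thm:ell} and the Dominated Convergence Theorem (using $|g|\leq\norm{g}_\infty$), for fixed $\sigma$ we have $G^t_\sigma\to G_\sigma:=\frac{1}{\sigma}\int_0^\sigma g\circ\varphi^{\ell\widetilde{Z}}_s\ \diff s$ in $L^2(\widetilde{\omega})$ as $t\to\infty$. The Cauchy--Schwarz inequality and the $L^2$-isometry of $\widetilde{h}_{-t}$ then give
$$
\modulo{\int_{\mathcal{M}} f\cdot(g\circ\widetilde{h}_t)\ \widetilde{\omega}}\leq\norm{f}_2\norm{G_\sigma}_2+o_t(1)+O_{f,g}(\sigma/t).
$$
The flow $\{\varphi^{\ell\widetilde{Z}}_s\}$ is a smooth time-change of the ergodic $Z$-flow that preserves $\widetilde{\omega}$ and commutes with $\widetilde{h}_t$ (Remark \ref{rk:comm}), hence is itself ergodic on $(\mathcal{M},\widetilde{\omega})$. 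Since $\int g\ \widetilde{\omega}=0$, the mean ergodic theorem yields $\norm{G_\sigma}_2\to 0$ as $\sigma\to\infty$. Given $\varepsilon>0$, choosing first $\sigma$ large enough that $\norm{f}_2\norm{G_\sigma}_2<\varepsilon/2$ and then $t$ large enough that the remaining errors are $<\varepsilon/2$, we conclude $\int f\cdot(g\circ\widetilde{h}_t)\ \widetilde{\omega}\to 0$ and hence mixing. The principal technical obstacle is the quantitative shearing estimate leading to the first displayed equation, which requires a careful tracking of the Jacobian of $\varphi^{\frac{1}{t}W}_{-s}$ with respect to $\widetilde{\omega}$; every subsequent step is a routine consequence of Proposition \ref{thm:ell} and ergodicity of the limiting $\widetilde{Z}$-flow.
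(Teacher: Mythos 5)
Your argument is correct, and it takes a genuinely different route from the paper's. The paper proves $\int(f\circ\widetilde{h}_t)\,g\,\lambda\omega\to 0$ by first averaging over a $W$-segment of \emph{fixed} length $\sigma$ using $\varphi^W$-invariance of Haar, then integrating by parts so that everything reduces to a \emph{pointwise almost-everywhere} estimate on $\sup_{s\in[0,\sigma]}\bigl|\int_0^s f\circ\widetilde{h}_t\circ\varphi^W_r\,\diff r\bigr|$. That pointwise estimate is obtained by a coboundary trick: $f$ is taken to be $\widetilde{Z}u$ for a smooth $u$, the integral of $f\,\lambda\widehat Z$ along the sheared curve $\gamma$ is bounded via Stokes' theorem, and the remainder goes to zero by Birkhoff convergence of $v_t\to\ell$ (equation \eqref{eq:2}), which in turn requires the ergodicity already established. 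You bypass the coboundary decomposition, the foliated $1$-form/Stokes machinery, and the integration by parts entirely: after the same intertwining and a change of variables (giving the $O(\sigma/t)$ error), you average in $s$, invoke Proposition \ref{thm:ell} together with dominated convergence to get $G^t_\sigma\to G_\sigma$ in $L^2$, apply Cauchy--Schwarz, and finally shrink $\norm{G_\sigma}_2$ by the mean ergodic theorem for the ergodic $\widetilde{Z}$-flow, taking $\sigma\to\infty$ \emph{after} $t\to\infty$. What your version buys is simplicity and softness — an $L^2$ argument with two applications of dominated convergence, Cauchy--Schwarz, and von Neumann's theorem, with no coboundary reduction and no Stokes; in fact it runs entirely in parallel with the paper's \emph{ergodicity} proof (Proposition \ref{ergodic}), merely swapping the role played there by the invariant function $g$ for the decaying ergodic average $G_\sigma$. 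What the paper's version buys is a pointwise (rather than $L^2$) control on the sheared integrals, which is the kind of information one would want to attempt quantitative refinements or spectral results. One small housekeeping point you should make explicit: Proposition \ref{thm:ell} is stated for a fixed $\sigma>0$ and produces a $\sigma$-dependent full-measure set, while your argument needs $\sigma\to\infty$; taking the countable intersection over $\sigma\in\N$ gives a single full-measure set on which the convergence holds for every integer $\sigma$, which suffices.
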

\begin{proof}
By ergodicity, we have that for $\widetilde{\omega}$-a.e.~$p \in \mathcal{M}$,
\begin{equation}\label{eq:2}
v_t(p) := \frac{1}{t}\int_{0}^t (\lambda \cdot (c + W\beta))  \circ \widetilde{h}_{\tau}(p) \diff \tau \to \ell \neq 0.
\end{equation}
Let $f,g \in \mathscr{C}^1(\mathcal{M})$ be smooth functions with $\int_{\mathcal{M}} f \widetilde{\omega} = 0$; we have to show that 
$$
\lim_{t \to \infty} \langle f \circ \widetilde{h}_{t} , g \rangle = \lim_{t \to \infty} \int_{\mathcal{M}} (f \circ \widetilde{h}_{t} ) g\ \lambda\omega =  0.
$$
Fix $\sigma > 0$. We consider again the flow $\{\varphi_s^W\}_{s \in \R}$ generated by $W$. The Haar measure $\omega$ is invariant under $\varphi^W$, hence
$$
 \int_{\mathcal{M}} (f \circ \widetilde{h}_{t} ) g\ \lambda\omega =  \frac{1}{\sigma} \int_0^{\sigma}  \int_{\mathcal{M}} (f \circ \widetilde{h}_{t} \circ \varphi_s^W ) (\lambda g \circ \varphi_s^W) \omega \diff s.  
$$ 
Integration by parts gives
\begin{equation*}
\begin{split}
 & \frac{1}{\sigma} \int_0^{\sigma}  \int_{\mathcal{M}} \Big(f \circ \widetilde{h}_{t} \circ \varphi_s^W \Big) (\lambda g \circ \varphi_s^W) \omega \diff s = \frac{1}{\sigma}   \int_{\mathcal{M}} \Big(\int_0^{\sigma}f \circ \widetilde{h}_{t} \circ \varphi_s^W \diff s \Big) (\lambda g \circ \varphi_{\sigma}^W) \omega \\
 & \qquad -    \frac{1}{\sigma}  \int_0^{\sigma}\int_{\mathcal{M}}  \Big(\int_0^{s}f \circ \widetilde{h}_{t} \circ \varphi_r^W \diff r\Big) (W(\lambda g) \circ \varphi_s^W)\ \omega \diff s.
\end{split}
\end{equation*}
Therefore
\begin{equation*}
\begin{split}
%&
\modulo{ \int_{\mathcal{M}} (f \circ \widetilde{h}_{t} ) g\ \lambda\omega } \leq %\\
%&\qquad \leq  
\left(  \frac{1}{\sigma} \norm{\lambda g}_{\infty} + \norm{W(\lambda g) }_{\infty}\right)   \int_{\mathcal{M}} \sup_{s \in [0,\sigma]} \modulo{\int_0^{s}f \circ \widetilde{h}_{t} \circ \varphi_r^W \diff r} \omega.
\end{split}
\end{equation*}
By Lebesgue Theorem, it is enough to show that the last term goes to zero pointwise almost everywhere for $t \to \infty$.

Fix $0 \leq s \leq \sigma$. For any point $p$ and for all $t \geq 1$, let
$$
\gamma(r) = \gamma_{t,p}^{s}(r) := \widetilde{h}_{t} \circ \varphi_r^W(p), \text{\ \ \ for }r \in [0,s];
$$
by \eqref{eq:solw}, the tangent vectors at this curve are
\begin{equation}\label{eq:tggamma}
\frac{\diff}{\diff r} \gamma(r) =( (\widetilde{h}_{t})_{\ast}(W))({\gamma(r)}) = W + \Bigg(\frac{1}{\lambda(\gamma(r))} \int_{0}^t (\lambda \cdot (c + W\beta))  \circ \widetilde{h}_{\tau}( \varphi_r^W(p)) \diff \tau\Bigg) Z. %=:  W + v_t(s)  Z.
\end{equation}

Let $\lambda \widehat{Z}$ be the smooth 1-form dual to the vector field $\widetilde{Z}= \lambda^{-1}Z$. Since
\begin{equation*}
\begin{split}
\frac{1}{t} \int_{\gamma} f\ \lambda \widehat{Z} &= \frac{1}{t} \int_0^{s}(f \circ \widetilde{h}_{t} \circ \varphi^W_r) \Big(\int_{0}^t (\lambda \cdot (c + W\beta))  \circ \widetilde{h}_{\tau}(\varphi_r^W(p)) \diff \tau\Big)\diff r \\
& = \int_0^{s}(f \circ \widetilde{h}_{t} \circ \varphi^W_r) v_t(\varphi_r^W(p)) \diff r,
\end{split}
\end{equation*}
we have
\begin{equation}\label{eq:add}
 \int_0^{s} f \circ \widetilde{h}_{t} \circ  \varphi^W_r \diff r = \frac{1}{\ell \cdot t} \int_{\gamma} f\ \lambda \widehat{Z} + \int_0^{s}(f \circ \widetilde{h}_{t} \circ  \varphi^W_r) \left(1- \frac{v_t(\varphi_r^W(p))}{ \ell } \right) \diff r.
\end{equation}
By ergodicity of $\varphi^Z$, and hence of $\varphi^{\widetilde{Z}}$, we can assume that $f$ is a smooth coboundary for $\varphi^{\widetilde{Z}}$, namely $f = \widetilde{Z}u$ for some $u \in \mathscr{C}^1(\mathcal{M})$. 
For all $V \in \mathscr{B}$, denote by $\widehat{V}$ the smooth 1-form dual to $V$.
Notice that, when integrating $\diff u = \sum_{V \in \mathscr{B}} Vu\ \widehat{V}$ along $\gamma$, the only non zero terms are those corresponding to the components along $W$ and $Z$. 
Thus, by \eqref{eq:tggamma}, we have
$$
\int_{\gamma} \diff u = \int_{\gamma} Zu\ \widehat{Z} + \int_{\gamma} Wu\ \widehat{W} = \int_{\gamma} f\ \lambda\widehat{Z} + \int_{\gamma} Wu\ \widehat{W},
$$
which yields the estimate
$$
\modulo{\int_{\gamma} f\ \lambda \widehat{Z} } \leq \modulo{ \int_{\gamma} \diff u } + \modulo{ \int_{\gamma} Wu\ \widehat{W} } \leq 2 \norm{u}_{\infty} + \norm{Wu}_{\infty} \sigma.
$$ 

Thus, the first integral in the right-hand side of \eqref{eq:add} is uniformly bounded.
Moreover, as we saw in \eqref{eq:2}, for almost every $p \in \mathcal{M}$ for almost every $r \in [0,s]$ we have $v_t(\varphi_r^W(p)) \to \ell$. Therefore
$$
\modulo{\int_0^{s}f \circ \widetilde{h}_{t} \circ \varphi^W_r \diff r} \leq \frac{2 \norm{u}_{\infty} + \norm{Wu}_{\infty} \sigma}{\ell \cdot t} + \norm{f}_{\infty}\int_0^{s} \modulo{1- \frac{v_t(\varphi_r^W(p))}{\ell} } \diff r \to 0 \text{ a.e.,}
$$
again by Lebesgue theorem.
\end{proof}

Theorem \ref{thm:1} follows from Propositions \ref{parabolic}, \ref{ergodic} and \ref{mixing}.

%%%%%%%%%%%%%%%%%%%%
%%%%%%%%%%%%%%%%%%%%
%%%%%%%%%%%%%%%%%%%%

\section{Proof of Proposition \ref{thm:ell}}\label{s:lemmas}

In this section, we prove Proposition \ref{thm:ell} by showing that $\ell$ is constant almost everywhere and, for almost every $p \in \mathcal{M}$, we have $\varphi^{(t)}_s \to \varphi^{\ell \widetilde{Z}}_s$ for all $s \in [0,\sigma]$.

Let us start by some preliminary lemmas.

\begin{lemma}\label{thm:extend}
If, for a given point $p \in \mathcal{M}$, a sequence of curves $\{\varphi^{(n_k)}_s\}_{k \in \N} \subset \mathscr{F}_p$ converges uniformly to a curve $\psi_s(p)$, then for all $r \in \R$ we have $\varphi^{(n_k)}_s \circ \varphi^{\widetilde{Z}}_r(p) \to \varphi^{\widetilde{Z}}_r \circ \psi_s(p)$.
\end{lemma}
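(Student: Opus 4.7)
The plan is to exploit two commutation relations already in the paper: from Remark \ref{rk:comm}, $\widetilde{h}_t$ commutes with $\varphi^{\widetilde{Z}}_r$ (this is equivalent to the invariance assumption \eqref{eq:vol}); and, because $Z\in\mathfrak{z}(\mathfrak{n})$, the left-invariant flows $\varphi^W_s$ and $\varphi^Z_\tau$ commute as one-parameter subgroups of $\sltre$. The obstacle to a purely algebraic argument is that $\varphi^W$ and $\varphi^{\widetilde{Z}}$ do \emph{not} commute (since $\widetilde{Z}=\lambda^{-1}Z$ and $W\lambda$ need not vanish), but the failure will be controllable when the $W$-time is small.

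First I would set $q_k:=\widetilde{h}_{-n_k}(p)$ and use Remark \ref{rk:comm} to push $\varphi^{\widetilde{Z}}_r$ through the innermost $\widetilde{h}_{-n_k}$ on the left-hand side, and through the outermost $\widetilde{h}_{n_k}$ on the right-hand side, obtaining
\[
\varphi^{(n_k)}_s\circ\varphi^{\widetilde{Z}}_r(p)=\widetilde{h}_{n_k}(A_k),\qquad \varphi^{\widetilde{Z}}_r\circ\varphi^{(n_k)}_s(p)=\widetilde{h}_{n_k}(B_k),
\]
with $A_k:=\varphi^W_{s/n_k}\circ\varphi^{\widetilde{Z}}_r(q_k)$ and $B_k:=\varphi^{\widetilde{Z}}_r\circ\varphi^W_{s/n_k}(q_k)$. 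The second expression already converges to the desired limit $\varphi^{\widetilde{Z}}_r(\psi_s(p))$, by the hypothesis $\varphi^{(n_k)}_s(p)\to\psi_s(p)$ and continuity of $\varphi^{\widetilde{Z}}_r$; hence it remains to show that $\widetilde{h}_{n_k}(A_k)-\widetilde{h}_{n_k}(B_k)\to 0$.

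To compare $A_k$ and $B_k$, I would re-express $\varphi^{\widetilde{Z}}_r$ as a $Z$-flow: since $\widetilde{Z}=\lambda^{-1}Z$, there is a smooth function $\sigma(r,y)$ with $\varphi^{\widetilde{Z}}_r(y)=\varphi^Z_{\sigma(r,y)}(y)$, which is Lipschitz in $y$ (uniformly for $r$ in bounded sets) by compactness of $\mathcal{M}$ and smoothness of $\lambda$. Because $\varphi^W_s$ and $\varphi^Z_\tau$ commute,
\[
A_k=\varphi^Z_{\sigma(r,q_k)}\bigl(\varphi^W_{s/n_k}(q_k)\bigr),\qquad B_k=\varphi^Z_{\sigma(r,\varphi^W_{s/n_k}(q_k))}\bigl(\varphi^W_{s/n_k}(q_k)\bigr),
\]
so $A_k$ and $B_k$ lie on the same $Z$-orbit and differ by the time
\[
\delta_k=\sigma(r,q_k)-\sigma\bigl(r,\varphi^W_{s/n_k}(q_k)\bigr),\qquad |\delta_k|\le L(r)\cdot d\bigl(q_k,\varphi^W_{s/n_k}(q_k)\bigr)\le C\,s/n_k\xrightarrow[k\to\infty]{}0.
\]

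Finally, I would rewrite the small $Z$-translation $\varphi^Z_{\delta_k}$ as a small $\widetilde{Z}$-translation $\varphi^{\widetilde{Z}}_{\tilde\delta_k}$ (with $\tilde\delta_k\to 0$, since $\lambda$ is bounded away from $0$ and $\infty$) and apply Remark \ref{rk:comm} once more:
\[
\widetilde{h}_{n_k}(A_k)=\widetilde{h}_{n_k}\bigl(\varphi^{\widetilde{Z}}_{\tilde\delta_k}(B_k)\bigr)=\varphi^{\widetilde{Z}}_{\tilde\delta_k}\bigl(\widetilde{h}_{n_k}(B_k)\bigr)=\varphi^{\widetilde{Z}}_{\tilde\delta_k}\bigl(\varphi^{\widetilde{Z}}_r\circ\varphi^{(n_k)}_s(p)\bigr),
\]
which converges to $\varphi^{\widetilde{Z}}_r(\psi_s(p))$ by joint continuity of the flow $\varphi^{\widetilde{Z}}$. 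The only mildly technical point is the Lipschitz control on $\sigma(r,\cdot)$, which is routine thanks to compactness of $\mathcal{M}$; the conceptual heart is that the $W$-flow at time $s/n_k$ almost commutes with $\varphi^{\widetilde{Z}}_r$, the error being absorbed by a vanishing $\widetilde{Z}$-translation that genuinely commutes with $\widetilde{h}_{n_k}$.
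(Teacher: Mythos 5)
Your proof is correct, and it takes a genuinely different route from the paper's. The paper argues infinitesimally: it computes the push-forward $\big(\varphi^{(t)}_s\big)_{\ast}(\widetilde{Z})$ by solving the triangular ODE system of \S\ref{s:ODE}, obtains $\big(\varphi^{(t)}_s\big)_{\ast}(\widetilde{Z}) = \exp\!\big(\tfrac{1}{t}\int_{-s}^0 \tfrac{W\lambda}{\lambda}\circ\varphi^{\frac{1}{t}W}_{\tau}\circ\widetilde{h}_{-t}\,\diff\tau\big)\widetilde{Z}$, shows the exponential prefactor tends to $1$ uniformly, and then integrates this tangent-vector convergence along $r\in[-R,R]$ starting from the assumed convergence at $r=0$. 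You instead argue at the level of points: you exploit the exact commutations $\widetilde{h}_t\circ\varphi^{\widetilde{Z}}_r=\varphi^{\widetilde{Z}}_r\circ\widetilde{h}_t$ (Remark \ref{rk:comm}) and $\varphi^W_s\circ\varphi^Z_\tau=\varphi^Z_\tau\circ\varphi^W_s$ (from $[W,Z]=0$), isolate the only non-commuting pair $\varphi^W$ and $\varphi^{\widetilde{Z}}$, and control its failure by a Lipschitz estimate on the time-change cocycle $\sigma(r,\cdot)$ that vanishes like $s/n_k$. The error is then re-expressed as a small $\widetilde{Z}$-translation, which commutes through $\widetilde{h}_{n_k}$ and dies in the limit. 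Your approach avoids the push-forward computation entirely and is more elementary; the paper's approach fits more naturally into the framework of \S\ref{s:ODE} (where the same push-forward calculus is used repeatedly) and gives the slightly stronger statement that the vector fields $\big(\varphi^{(t)}_s\big)_{\ast}(\widetilde{Z})$ converge uniformly to $\widetilde{Z}$, not merely that the curves converge. One small point worth making explicit if you wrote this up: $\tilde\delta_k$ depends on $s$ through $B_k$, but the bound $|\tilde\delta_k|\le C L(r)\sigma/n_k$ is uniform in $s\in[0,\sigma]$, so the resulting convergence is indeed uniform in $s$, matching the hypothesis and the uniformity needed downstream.
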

Thus, if $\varphi^{(n_k)}_s(p) \to \psi_s(p)$, then for all $q = \varphi^{\widetilde{Z}}_r (p)$ we have that $\varphi^{(n_k)}_s (q) \to \psi_s(q)$, where $\psi_s(q) = \varphi^{\widetilde{Z}}_r \circ \psi_s(p)$. In particular, $\psi_s$ and $\varphi^{\widetilde{Z}}_r$ commute.
\begin{proof}[Proof of Lemma \ref{thm:extend}]
Fix any $R>0$. We show that the tangent vectors of  $\varphi^{(t)}_{s}\circ \varphi^{\widetilde{Z} }_r(p)$ converge uniformly in $r \in [-R,R]$ to $1/\big(\lambda \circ \varphi^{\widetilde{Z} }_r(p) \big) Z$ for $t \to \infty$. Since, by hypothesis, for $r=0$ we have $\varphi^{(n_k)}_s (p) \to \psi_s(p)$, we can conclude that the limit of $\varphi^{(n_k)}_{s}\circ \varphi^{\widetilde{Z} }_r(p)$ exists and is the curve starting at $\psi_s(p)$ with tangent vector $1/\big(\lambda \circ \varphi^{\widetilde{Z} }_r(p) \big) Z$, namely the curve $\varphi^{\widetilde{Z} }_r \circ \psi_s(p)$. The situation is represented in Figure \ref{fig:2}.

\begin{figure}[h!]
\centering
\begin{tikzpicture}[scale=3.5]
\foreach \Point in {(0.5,0.195), (1.1,0.195), (0.5,0.595), (1.1,0.595), (2.2,1.045), (2.8,1.045), (2.95,1.295), (3.55,1.295)}{ %, (1.8,0.9), (1.8,1.3)}{
    \node at \Point {\textbullet};
}
\clip (0,0) rectangle (4,1.5);
\draw[->] (0.5,0.2) -- (0.8,0.2);
\draw (0.8,0.2) -- (1.1,0.2);
\draw[->] (0.5,0.6) -- (0.78,0.6);
\draw[->] (0.78,0.6) -- (0.82,0.6);
\draw (0.82,0.6) -- (1.1,0.6);
\draw (0.5,0.2) -- (0.5,0.6);
\draw (1.1,0.2) -- (1.1,0.6);
\draw[->] (1.25,0.75) sin (1.65,0.95);
\draw[<-] (1.35,0.65) sin (1.75,0.85);
\draw (2,1.05) -- (2.2,1.05);
\draw[->] (2.2,1.05) -- (2.5,1.05);
\draw (2.5,1.05) -- (3.6,1.05);
\draw[->] (2.95,1.3) -- (3.23,1.3);
\draw[->] (3.23,1.3) -- (3.27,1.3);
\draw (3.27,1.3) -- (3.55,1.3);
\draw (2.2,1.05) sin (2.46,1.12) cos (2.77,1.24) sin (2.95,1.3);
\draw (2.8,1.05) sin (3.06,1.12) cos (3.37,1.24) sin (3.55,1.3);
\draw[white] (1.4,0.95)  node[anchor=east] {\textcolor{black}{$\widetilde{h}_{t}$}};
\draw[white] (1.55,0.65)  node[anchor=west] {\textcolor{black}{$\widetilde{h}_{-t}$}};
\draw[black] (0.5,0.2)  node[anchor=north] {$\widetilde{h}_{-t}(p)$};
\draw[white] (0.5,0.6)  node[anchor=south] {\textcolor{black}{$\varphi_s^{\frac{1}{t}W} \circ \widetilde{h}_{-t}(p)$}};
\draw[white] (1.1,0.2)  node[anchor=north west] {\textcolor{black}{$\varphi_s^{\widetilde{Z}} \circ \widetilde{h}_{-t}(p) = \widetilde{h}_{-t} \circ \varphi_s^{\widetilde{Z}}(p) $}};
\draw[black] (2.2,1.05)  node[anchor=north] {$p$};
\draw[black] (2.8,1.05)  node[anchor=north] {$\varphi_s^{\widetilde{Z}}(p)$};
\draw[black] (2.95,1.3)  node[anchor=south] {$ \varphi^{(t)}_s(p)$};
\end{tikzpicture}
\caption{The flows $\{ \varphi^{(t)}_s \}_{s \in [0, \sigma]}$ and $\{ \varphi^{\widetilde{Z}}_r\}_{r \in \R}$.}
\label{fig:2}
\end{figure}

We first compute the push-forward $\big(\varphi^{(t)}_s \big)_{\ast} (\widetilde{Z} ) $ for $t \geq 1$. By Remark \ref{rk:comm}, $\big( \widetilde{h}_{t} \big)_{\ast} (\widetilde{Z} ) = (\widetilde{Z} ) $.
In order to compute the push-forward  $\big(\varphi^{\frac{1}{t}W}_s \big)_{\ast} (\widetilde{Z} )$, we have to solve a system analogous to \eqref{eq:system}. Also in this case, the system is in triangular form, hence the only nontrivial equation is 
\begin{equation*}
\begin{split}
\frac{\diff}{\diff s} \big(a_{\widetilde{Z} }(s) \circ \varphi^{\frac{1}{t}W}_s\big)\widetilde{Z} \circ \varphi^{\frac{1}{t}W}_s  &= -  \big(a_{\widetilde{Z} }(s) \circ \varphi^{\frac{1}{t}W}_s\big)\left[ \frac{1}{t}W, \frac{1}{\lambda} Z \right] \circ \varphi^{\frac{1}{t}W}_s \\
&= \big(a_{\widetilde{Z} }(s) \circ \varphi^{\frac{1}{t}W}_s\big) \frac{1}{t} \frac{W\lambda}{\lambda} \widetilde{Z} \circ \varphi^{\frac{1}{t}W}_s.
\end{split}
\end{equation*}
We get
$$
\big( \varphi^{\frac{1}{t}W}_s \big)_{\ast} (\widetilde{Z} ) =  \exp\Big( \frac{1}{t} \int_{-s}^0 \frac{W\lambda}{\lambda} \circ  \varphi^{\frac{1}{t}W}_{\tau} \diff \tau  \Big)\widetilde{Z}. 
$$
From this, we deduce
\begin{equation*}
\begin{split}
\big(\varphi^{(t)}_s \big)_{\ast} (\widetilde{Z} ) &= \big(\widetilde{h}_{t} \big)_{\ast}\big( \varphi^{\frac{1}{t}W}_s \big)_{\ast}\big( \widetilde{h}_{-t} \big)_{\ast}(\widetilde{Z} )  = \big(\widetilde{h}_{t} \big)_{\ast}\big( \varphi^{\frac{1}{t}W}_s \big)_{\ast} (\widetilde{Z} ) \\
& = \big(\widetilde{h}_{t} \big)_{\ast}\Big( \exp\Big( \frac{1}{t} \int_{-s}^0 \frac{W\lambda}{\lambda} \circ  \varphi^{\frac{1}{t}W}_{\tau} \diff \tau  \Big) \widetilde{Z}  \Big) \\
&= \exp\Big( \frac{1}{t} \int_{-s}^0 \frac{W\lambda}{\lambda} \circ  \varphi^{\frac{1}{t}W}_{\tau} \circ \widetilde{h}_{-t} \diff \tau  \Big) \widetilde{Z}. 
\end{split}
\end{equation*}
For any $s \in [0,\sigma]$ and any initial point $q \in \mathcal{M}$,
$$
\modulo{ \frac{1}{t} \int_{-s}^0 \frac{W\lambda}{\lambda}\circ  \varphi^{\frac{1}{t}W}_{\tau} \circ \widetilde{h}_{-t} (q) \diff \tau } \leq \frac{\sigma}{t} \norm{ \frac{W\lambda}{\lambda}}_{\infty} \to 0, \text{\ \ \ for\ }t \to \infty.
$$
Therefore, 
for any fixed $s \in [0,\sigma]$, the tangent vectors of the curves $\varphi^{(t)}_{s}\circ \varphi^{\widetilde{Z} }_r(p)$ converge uniformly in $r$, that is
$$
\frac{\diff}{\diff r} \big(\varphi^{(t)}_{s}\circ \varphi^{\widetilde{Z}}_r \big) (p) = D\varphi^{(t)}_{s}\Big\rvert_{\varphi^{\widetilde{Z}}_r(p)} \Big( \frac{1}{\lambda} Z\Big)(p) \to \frac{1}{\lambda \circ \varphi^{\widetilde{Z} }_r(p)}Z.
$$ 
Since at the initial point $p$, i.e.~for $r=0$, by hypothesis we have $\varphi^{(n_k)}_s (p) \to \psi_s(p)$, the sequence $\varphi^{(n_k)}_s \circ \varphi^{\widetilde{Z} }_r(p)$ converges to $\varphi^{\widetilde{Z} }_r \circ \psi_s(p)$ uniformly in $r\in [-R,R]$.
\end{proof}

Consider a typical point $p \in \mathcal{M}$. The family 
$$
\mathscr{F}_p = \{ \varphi^{(t)}_s(p) : s \in [0,\sigma] \} \subset \mathscr{C}([0,\sigma], \mathcal{M})
$$
is clearly pointwise relatively bounded. 
For $t \geq 1$ sufficiently large, we have 
\begin{equation}\label{eq:ztbounded}
\modulo{\frac{\diff}{\diff s} \varphi^{(t)}_s(p) } \leq \norm{\frac{1}{t}W + \frac{\ell_t(p)}{\lambda(p)}Z }_{\infty}\leq 1 + \frac{\max \lambda}{\min \lambda} (c+ \norm{W \beta}_{\infty}),
\end{equation}
therefore, $\mathscr{F}_p$ is also equi-Lipschitz. Hence, by Ascoli-Arzel\`{a} Theorem, it is relatively compact in $\mathscr{C}([0,\sigma], \mathcal{M})$. Consider a converging subsequence $\varphi_s^{(n_k)}(p) \to \psi_s(p)$. The limit $\psi_s(p)$ is Lipschitz and, in particular, it is differentiable for almost every $s \in [0,\sigma]$. Since the $W$-component of the tangent vectors of $\varphi^{(n_k)}_s(p)$ converges uniformly to zero by \eqref{eq:tgtvect}, the limit curve $\psi_s(p)$ is parallel to $Z$.
Moreover, by Lemma \ref{thm:extend}, $\psi_s$ is defined for all points in the $Z$-orbit of $p$.
\begin{lemma}\label{thm:llambda}
Let $q \in \mathcal{M}$ be such that $\varphi^{(n_k)}_s(q) \to \psi_s(q)$ for all $s \in [0,\sigma]$. Then, if the tangent vector of $\psi_s$ at $q$ exists, it equals $({\ell}/{\lambda})(q) Z$.
\end{lemma}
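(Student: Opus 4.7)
The plan is to exploit the commutation $[W,Z]=0$, so that the distribution $\spn\{W,Z\}$ is integrable. Since the tangent vector of $\gamma_k(s) := \varphi^{(n_k)}_s(q)$ computed in \eqref{eq:tgtvect} lies in this distribution at every point, the curve $\gamma_k$ is confined to the $2$-dimensional abelian leaf through $q$, parametrized by $(a,b)\mapsto q\exp(aW+bZ)$. In these coordinates the ODE for $\gamma_k$ becomes $\dot a = 1/n_k$ and $\dot b = \ell_{n_k}(\gamma_k)/\lambda(\gamma_k)$, so that
$$
\gamma_k(s) = q\exp\left(\frac{s}{n_k}W + f_k(s)Z\right), \qquad f_k(s) = \int_0^s \frac{\ell_{n_k}(\gamma_k(u))}{\lambda(\gamma_k(u))}\, \diff u.
$$
As $k\to\infty$ the $W$-component $s/n_k$ vanishes uniformly in $s\in[0,\sigma]$, so the hypothesis $\gamma_k(s)\to\psi_s(q)$ forces $\psi_s(q) = \varphi^Z_{F(s)}(q)$ with $F(s) := \lim_k f_k(s)$. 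If the tangent of $s\mapsto\psi_s(q)$ at $s=0$ exists it equals $F'(0)Z(q)$, and the task reduces to proving $F'(0) = \ell(q)/\lambda(q)$.

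For this, the key step is the uniform estimate
$$
\left\lvert\frac{\ell_{n_k}(\gamma_k(u))}{\lambda(\gamma_k(u))} - \frac{\ell_{n_k}(q)}{\lambda(q)}\right\rvert \leq C\, u,
$$
valid for all $k\geq 1$ and small $u$ with $C$ independent of $k$. Writing the difference of $\ell_{n_k}$ as an integral of its directional derivative along the segment in the $(a,b)$-plane joining $(0,0)$ and $(u/n_k,\,f_k(u))$, the claim reduces to the pointwise bounds $\lvert W\ell_{n_k}\rvert \leq Cn_k$ and $\lvert Z\ell_{n_k}\rvert \leq C$. Both follow by differentiating $\ell_{n_k}(p) = \frac{1}{n_k}\int_{-n_k}^0 \lambda(c+W\beta)\circ\widetilde{h}_\tau(p)\, \diff\tau$ under the integral and applying the push-forward formulas \eqref{eq:solw} and \eqref{eq:solz}: the $Z$-coefficient of $(\widetilde{h}_\tau)_\ast W$ grows only linearly in $|\tau|$, which after averaging over $\tau\in[-n_k,0]$ produces the factor $n_k$, while $(\widetilde{h}_\tau)_\ast Z$ stays uniformly bounded. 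Multiplying by the respective displacements $u/n_k$ and $|f_k(u)|=O(u)$ -- the latter from the uniform bound \eqref{eq:ztbounded} -- and combining with the standard Lipschitz estimate on $\lambda$ gives the claim.

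Integrating then yields $f_k(s) = s\,\ell_{n_k}(q)/\lambda(q) + O(s^2)$ with the remainder uniform in $k$. Passing to the limit $k\to\infty$ at a Birkhoff-generic point $q$ (so that $\ell_{n_k}(q)\to\ell(q)$ by the ergodic theorem for $\widetilde{h}_\tau$ on $(\mathcal{M},\widetilde\omega)$), one obtains $F(s) = s\,\ell(q)/\lambda(q) + O(s^2)$, so that $F'(0)=\ell(q)/\lambda(q)$ whenever the derivative exists. The main obstacle is the uniform control on the directional derivatives of $\ell_{n_k}$: the naive parabolic bound $\lVert D\widetilde{h}_\tau\rVert_\infty = O(|\tau|^4)$ would yield a generic Lipschitz constant of order $n_k^4$, which is useless; the saving is that the $Z$-component of $(\widetilde{h}_\tau)_\ast W$ grows only linearly in $|\tau|$, and this growth is precisely absorbed by the $1/n_k$ speed of $\gamma_k$ in the $W$-direction.
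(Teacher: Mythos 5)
Your proof is essentially correct and reaches the same conclusion by a genuinely different route. The paper argues \emph{weakly}: it tests $\psi_s(q)$ against an arbitrary smooth function $f$, converts the problem to a comparison of curve integrals via Stokes' Theorem, and then controls the error terms with the Mean-Value inequality together with Lemma~\ref{thm:Zft} (the bounds $|Z\ell_t|\le C_Z$ and $|W\ell_t|\le C_W t$). You instead argue \emph{strongly}: you observe that $[W,Z]=0$ makes $\spn\{W,Z\}$ integrable, lift the curve $\gamma_k$ to coordinates $(a,b)$ on the abelian leaf through $q$, solve the ODE explicitly to get $a(s)=s/n_k$ and $b(s)=f_k(s)$, and then obtain the pointwise expansion $f_k(s)=s\,\ell_{n_k}(q)/\lambda(q)+O(s^2)$ uniformly in $k$. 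Both routes bottom out on the same crucial estimates on $W\ell_t$ and $Z\ell_t$ (which is precisely Lemma~\ref{thm:Zft} of the paper and is the real engine of the argument), and both exploit the fact that the $W$-displacement of order $u/n_k$ exactly cancels the linear-in-$t$ growth of $W\ell_t$. Your version is arguably more transparent because it makes the leaf geometry explicit and avoids test functions entirely; the paper's Stokes formulation is marginally more robust because it never needs to name $F(s)$.

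Two small points worth tightening. First, the passage from $\gamma_k(s)\to\psi_s(q)$ and $a(s)\to 0$ to ``$\psi_s(q)=\varphi^Z_{F(s)}(q)$ with $F(s)=\lim_k f_k(s)$'' tacitly uses that the leaf parametrization $(a,b)\mapsto q\exp(aW+bZ)$ is injective on the range traversed; this is automatic for small $s$ (discreteness of $\Gamma$ prevents the $Z$-orbit from returning within a short time), and since only $F'(0)$ is needed, restricting to small $s$ at the outset closes the gap. Second, you invoke Birkhoff genericity of $q$ to get $\ell_{n_k}(q)\to\ell(q)$; the paper's statement of Lemma~\ref{thm:llambda} does not spell this out, but its own proof also requires $\ell_{n_k}\circ\psi_s(q)\to\ell\circ\psi_s(q)$ a.e.\ in $s$, so this is an implicit standing assumption in the application (where $q$ ranges over the $Z$-orbit of a typical point). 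Making it explicit, as you do, is if anything an improvement.
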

In order to prove Lemma \ref{thm:llambda}, we need the following estimates.

\begin{lemma}\label{thm:Zft}
There exist constants $C_Z >0$ and $C_W >0$ such that for all $t \geq 1$ we have $|Z\ell_{t}| \leq C_Z$ and $|W\ell_{t}| \leq C_W t$.
\end{lemma}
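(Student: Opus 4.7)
The plan is to differentiate the defining expression
\[
\ell_t(p) = \frac{1}{t}\int_{-t}^0 g \circ \widetilde{h}_\tau(p)\, \diff \tau, \qquad g := \lambda(c + W\beta),
\]
in the directions $Z$ and $W$, passing the derivative through the flow $\widetilde{h}_\tau$ by means of the push-forward formulas \eqref{eq:solw} and \eqref{eq:solz} derived in \S\ref{s:ODE}. The key chain-rule identity, valid for any vector field $X$ and any smooth function $g$, is
\[
X(g \circ \widetilde{h}_\tau) = \bigl((\widetilde{h}_\tau)_{\ast} X\bigr)(g) \circ \widetilde{h}_\tau,
\]
so the whole matter reduces to controlling the coefficients appearing in $(\widetilde{h}_\tau)_{\ast} Z$ and $(\widetilde{h}_\tau)_{\ast} W$.

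For $Z\ell_t$, formula \eqref{eq:solz} gives $(\widetilde{h}_\tau)_{\ast}(Z) = (\lambda\circ\widetilde{h}_{-\tau}/\lambda)Z$. Evaluating at $\widetilde{h}_\tau(p)$, the coefficient becomes $\lambda(\widetilde{h}_{-\tau}\circ\widetilde{h}_\tau(p))/\lambda(\widetilde{h}_\tau(p)) = \lambda(p)/\lambda(\widetilde{h}_\tau(p))$, which is uniformly bounded by $\max\lambda/\min\lambda$. Consequently
\[
\modulo{Z(g \circ \widetilde{h}_\tau)(p)} = \modulo{\frac{\lambda(p)}{\lambda(\widetilde{h}_\tau(p))}(Zg)(\widetilde{h}_\tau(p))} \leq \frac{\max\lambda}{\min\lambda}\norm{Zg}_{\infty},
\]
and integration in $\tau$ followed by division by $t$ preserves this bound, yielding the desired constant $C_Z$ independent of $t$.

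For $W\ell_t$, formula \eqref{eq:solw} gives $(\widetilde{h}_\tau)_{\ast}(W) = W + K_\tau\, Z$ with
\[
K_\tau(q) = \frac{1}{\lambda(q)}\int_{-\tau}^0 (\lambda(c + W\beta))\circ\widetilde{h}_s(q)\, \diff s,
\]
so that $\norm{K_\tau}_{\infty} \leq M|\tau|$, where $M := (|c| + \norm{W\beta}_{\infty})\max\lambda/\min\lambda$. Therefore
\[
W(g \circ \widetilde{h}_\tau)(p) = (Wg)(\widetilde{h}_\tau(p)) + K_\tau(\widetilde{h}_\tau(p))\,(Zg)(\widetilde{h}_\tau(p))
\]
is bounded pointwise by $\norm{Wg}_{\infty} + M|\tau|\norm{Zg}_{\infty}$. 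Integrating over $\tau \in [-t,0]$ and dividing by $t$ gives $|W\ell_t(p)| \leq \norm{Wg}_{\infty} + (M\norm{Zg}_{\infty}/2)\, t$, which for $t\geq 1$ is at most $C_W t$ with $C_W := \norm{Wg}_{\infty} + M\norm{Zg}_{\infty}/2$.

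There is no real obstacle here: the lemma is essentially a direct computation once the push-forwards of \S\ref{s:ODE} are in hand. The only bookkeeping subtlety is to keep track of the fact that $(\widetilde{h}_\tau)_{\ast} X$ must be evaluated at $\widetilde{h}_\tau(p)$ and not at $p$; this is precisely what produces the cancellation $\lambda\circ\widetilde{h}_{-\tau}\circ\widetilde{h}_\tau = \lambda$ in the $Z$-derivative and keeps $|Z\ell_t|$ uniformly bounded in $t$, whereas no such cancellation occurs for the $Z$-component $K_\tau$ of $(\widetilde{h}_\tau)_{\ast}W$, whose size $O(|\tau|)$ is inherited directly from the time-integral and is responsible for the linear growth in the $W$-direction.
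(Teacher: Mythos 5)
The proposal is correct and takes essentially the same approach as the paper: differentiate under the integral sign, pass $Z$ and $W$ through $\widetilde{h}_\tau$ via the push-forward formulas \eqref{eq:solw} and \eqref{eq:solz}, and bound the resulting coefficients (the uniform bound $\lambda/(\lambda\circ\widetilde{h}_\tau)$ for $Z$, the $O(|\tau|)$ coefficient for $W$). The only cosmetic difference is that the paper writes the $Z$-component of $(\widetilde{h}_\tau)_{\ast}W$ as $\tau\ell_\tau/\lambda$ while you write the integral out explicitly; the constants differ slightly but harmlessly.
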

\begin{proof}
Define $C_1 = \norm{\lambda \cdot (c+ W \beta)}_{\infty}$, so that for all $t \geq 1$ and for all $p \in \mathcal{M}$ we have $|\ell_t(p)| \leq C_1$, and $C_2= \norm{Z(\lambda \cdot (c+ W \beta))}_{\infty}$.
A direct computation using \eqref{eq:solz} yields
\begin{equation*}
\begin{split}
\modulo{Z\ell_{t}} &= \modulo{\frac{1}{t}\int_{-t}^0 Z \big( ( \lambda \cdot (c+ W \beta))\circ\widetilde{h}_{\tau}\big) \diff \tau }= \modulo{ \frac{1}{t}\int_{-t}^0 (\widetilde{h}_{\tau})_{\ast}(Z)( \lambda \cdot (c + W \beta))\circ\widetilde{h}_{\tau} \diff \tau}\\
&= \modulo{ \frac{1}{t}\int_{-t}^0 \frac{\lambda}{\lambda \circ \widetilde{h}_{\tau} } Z( \lambda \cdot (c + W \beta))\circ\widetilde{h}_{\tau} \diff \tau} \leq \frac{\max \lambda}{\min \lambda} C_2.
\end{split}
\end{equation*}
Similarly, by \eqref{eq:solw},
\begin{equation*}
\begin{split}
\modulo{W\ell_{t}} &= \modulo{ \frac{1}{t}\int_{-t}^0 (\widetilde{h}_{\tau})_{\ast}(W)( \lambda \cdot (c + W \beta))\circ\widetilde{h}_{\tau} \diff \tau}\\
&= \modulo{ \frac{1}{t}\int_{-t}^0 \left(W + \frac{\tau \ell_{\tau}}{\lambda}Z\right)( \lambda \cdot (c + W \beta))\circ\widetilde{h}_{\tau} \diff \tau} \leq \norm{W(\lambda \cdot (c+ W \beta))}_{\infty} + \frac{C_1}{\min \lambda} C_2 \frac{t}{2},
\end{split}
\end{equation*}
which concludes the proof.
\end{proof}

\begin{proof}[Proof of Lemma \ref{thm:llambda}]
Let $f \in \mathscr{C}^{\infty}(\mathcal{M})$; we need to show that 
$$
\frac{\diff}{\diff s}\Big\rvert_{s=0} f \circ \psi_s(q) = \frac{\ell}{\lambda}(q) Zf(q).
$$
whenever the limit exists.
We denote by $\varphi^{(n_k)}(q)$ and $\psi(q)$ the curves $s \mapsto \varphi^{(n_k)}_s(q)$ and $s \mapsto \psi_s(q)$ for $s \in [0,\sigma]$ respectively. 

By Stokes Theorem, since $\varphi^{(n_k)}_0(q) \to \psi_0(q)$ and $\varphi^{(n_k)}_{\sigma}(q) \to \psi_{\sigma}(q)$, we have
\begin{equation}\label{eq:stokes}
\int_{\varphi^{(n_k)}(q)} \diff f \to \int_{\psi(q)} \diff f.
\end{equation}
On the other hand, by \eqref{eq:tgtvect},
$$
\int_{\varphi^{(n_k)}(q)} \diff f = \frac{1}{n_k} \int_0^{\sigma} Wf \circ \varphi_s^{(n_k)}(q) \diff s + \int_0^{\sigma} \left(Zf \cdot \frac{\ell_{n_k}}{\lambda} \right) \circ \varphi^{(n_k)}_s(q) \diff s. 
$$
We rewrite the second term in the right-hand side as
\begin{equation*}
\begin{split}
\int_0^{\sigma} \left(Zf \cdot \frac{\ell_{n_k}}{\lambda} \right) \circ \varphi^{(n_k)}_s(q) \diff s =  & \int_0^{\sigma} \big( Zf \circ \varphi^{(n_k)}_s(q) \big) \cdot \Big(\frac{\ell_{n_k}}{\lambda} \circ \psi_s(q)\Big) \diff s \\
& + \int_0^{\sigma} \big( Zf \circ \varphi^{(n_k)}_s(q) \big) \cdot \Big( \frac{\ell_{n_k}}{\lambda} \circ \varphi^{(n_k)}_s(q) - \frac{\ell_{n_k}}{\lambda} \circ \psi_s(q) \Big) \diff s.
\end{split}
\end{equation*}
By the Mean-Value Theorem, see Figure \ref{fig:3},
$$
\modulo{\frac{\ell_{n_k}}{\lambda} \circ \varphi^{(n_k)}_s(q) - \frac{\ell_{n_k}}{\lambda} \circ \psi_s(q)} \leq \modulo{Z \Big(\frac{\ell_{n_k}}{\lambda}\Big)} \cdot \text{dist}(\varphi^{(n_k)}_s(q), \psi_s(q)) + \modulo{W \Big(\frac{\ell_{n_k}}{\lambda}\Big)} \frac{s}{n_k}. 
$$
\begin{figure}[h!]
\centering
\begin{tikzpicture}[scale=3.5]
\foreach \Point in {(0.2,0.3), (1.8,0.3), (2.3,1.2)}{
    \node at \Point {\textbullet};
}
\clip (-0.5,-0.1) rectangle (3.5,1.5);
\draw (0,0.3) -- (2.5,0.3);
\draw[->] (0,0.7) -- (0,1.1);
\draw[black] (0,0.9)  node[anchor=east] {$W$};
\draw[->] (0.8,0.1) -- (1.2,0.1);
\draw[white] (1,0.1)  node[anchor=north] {\textcolor{black}{$Z$}};
\draw (0.2,0.3) sin (0.6,0.5) cos (1,0.75) sin (1.3,0.9) cos (1.9,1.05) sin (2.3,1.2);
\draw[thick, red, densely dotted] (2.3,1.2) -- (2.3,0.3); 
\draw[thick, red, densely dotted] (1.8,0.27) -- (2.3,0.27); 
%\draw (0.2, 0.2) rectangle (1,0.8);
\draw[black] (0.2,0.3)  node[anchor=north] {$q$};
\draw[black] (1.8,0.3)  node[anchor=north east] {$\psi_s(q)$};
\draw[black] (2.3,1.2)  node[anchor=south] {$ \varphi^{(n_k)}_s(q)$};
\draw[red] (2.3,0.8)  node[anchor=west] {$ = \frac{s}{n_k}$};
\draw[red] (2,0.25)  node[anchor=north west] {$\leq \text{dist}\big( \varphi^{(n_k)}_s(q), \psi_s(q) \big)$};
%\draw[white] (1.8,1.15)  node[anchor=west] {\textcolor{red}{$ f(\widetilde{h}_t(p)) - f(p)$}};
\end{tikzpicture}
\caption{Application of the Mean-Value Theorem.}
 \label{fig:3}
\end{figure}

By Lemma \ref{thm:Zft}, there exists a constant $C$ such that
$$
\modulo{\frac{\ell_{n_k}}{\lambda} \circ \varphi^{(n_k)}_s(q) - \frac{\ell_{n_k}}{\lambda} \circ \psi_s(q)} \leq C \big( \text{dist}(\varphi^{(n_k)}_s(q), \psi_s(q)) + s \big),
$$
therefore
\begin{equation*}
\begin{split}
\modulo{\int_{\varphi^{(n_k)}(q)} \diff f - \int_0^{\sigma} \big( Zf \circ \varphi^{(n_k)}_s(q) \big) \cdot \frac{\ell_{n_k}}{\lambda} \circ \psi_s(q) \diff s} \leq & \frac{\norm{Wf}_{\infty} \sigma}{n_k} \\
+  \norm{Zf}_{\infty} C  \int_0^{\sigma} \big( \text{dist}(\varphi^{(n_k)}_s(q), \psi_s(q)) + s \big) \diff s.
\end{split}
\end{equation*}
We remark that $(\ell_t / \lambda)(p)$ is uniformly bounded in $t$ and $p$ as shown in \eqref{eq:ztbounded}. Hence, taking the limit for $k \to \infty$, using \eqref{eq:stokes} and Lebesgue Theorem,
$$
\modulo{\int_{\psi(q)} \diff f - \int_0^{\sigma} \left(Zf \cdot \frac{\ell}{\lambda} \right) \circ \psi_s(q) \diff s} \leq  \norm{Zf}_{\infty} C \frac{\sigma^2}{2}.
$$
Finally, dividing by $\sigma$ and taking the limit $\sigma \to 0$, 
$$
\frac{\diff}{\diff s}\Big\rvert_{s=0} f \circ \psi_s(q) = \lim_{\sigma \to 0} \frac{1}{\sigma} \int_{\psi(q)} \diff f = \lim_{\sigma \to 0} \frac{1}{\sigma}  \int_0^{\sigma} \left(Zf \cdot \frac{\ell}{\lambda} \right) \circ \psi_s(q) \diff s = \frac{\ell}{\lambda}(q)Zf(q).
$$
\end{proof}

We are now in the position to conclude the proof of Proposition \ref{thm:ell}.

\begin{proof}[Proof of Proposition \ref{thm:ell}]
Consider $p \in \mathcal{M}$ and let $\psi_s(p)$ be a limit point of $\mathscr{F}_p$ in $\mathscr{C}([0,\sigma], \mathcal{M})$ as above. 
The family $\{\ell_t\circ \varphi^Z_s(p): t \geq 1\}$ is uniformly bounded and, by Lemma \ref{thm:Zft}, it is equi-Lipschitz in $s$. By Ascoli-Arzel\`{a} Theorem, it is relatively compact and every limit point is a Lipschitz function. Therefore, since $\ell_t \to \ell$ almost everywhere, the function $\ell \circ \varphi^Z_s(p)$ is Lipschitz for almost every $p$; in particular, $Z \ell$ exists almost everywhere. 

We now show that $Z\ell =0$ almost everywhere; since $\ell \circ \varphi^Z_s(p)$ is Lipschitz for almost every $p$, this implies that $\ell$ is constant along almost every $\varphi^Z$-orbit. From the ergodicity of $\{\varphi^Z_t\}_{t \in \R}$, we deduce that $\ell$ is constant almost everywhere. 

By Lemma \ref{thm:extend}, we have $\psi_s (p) =  \varphi^{\widetilde{Z} }_r \circ \psi_s \circ  \varphi^{\widetilde{Z} }_{-r}(p)$.
For almost every $p \in \mathcal{M}$, we can differentiate with respect to $s$, and, by Lemma \ref{thm:llambda}, we obtain
$$
\ell(p) \widetilde{Z} = \left( D \varphi_r^{\widetilde{Z}}\big(\varphi^{\widetilde{Z} }_{-r}(p)\big)\right) \left( \ell \circ \varphi^{\widetilde{Z} }_{-r}(p) \widetilde{Z} \right).
$$
Differentiating with respect to $r$, since $Z\ell$ exists almost everywhere, we deduce
$$
0 = \frac{\diff}{\diff r}\Big\rvert_{r=0} \left( D \varphi_r^{\widetilde{Z}}\big(\varphi^{\widetilde{Z} }_{-r}(p)\big)\right) \left( \ell \circ \varphi^{\widetilde{Z} }_{-r}(p) \widetilde{Z} \right) = - \big( \widetilde{Z} \ell (p)\big) \widetilde{Z} - \ell(p) \mathscr{L}_{\widetilde{Z}}(\widetilde{Z}) = \left( - \frac{1}{\lambda(p)}Z\ell (p) \right) \widetilde{Z}.
$$
This implies that $Z \ell = 0$ almost everywhere, which was our claim.

We obtained that, for almost every $p \in \mathcal{M}$, the tangent vector of $\psi_s$ at $p$ is $\ell \widetilde{Z} $ so that $\psi_s(p) =  \varphi^{\ell \widetilde{Z} }_s(p)$. Since this holds for every limit point $\psi_s(p)$, $\varphi^{\ell \widetilde{Z} }_s(p)$ is the only limit point for $\mathscr{F}_p$, where $p$ was arbitrarily chosen in a full-measure set. 
\end{proof}

%%%%%%%%%%%%%%%%%%%%
%%%%%%%%%%%%%%%%%%%%
%%%%%%%%%%%%%%%%%%%%

\section{Proof of Theorem \ref{thm:conjug}}\label{s:pp7}

We now prove Theorem \ref{thm:conjug}: we show that $\{\widetilde{h}_t\}_{t \in \R}$ is smoothly conjugated to the unperturbed homogeneous flow $\{\varphi_t^U\}_{t \in \R}$ if and only if $\beta$ is a smooth $\widetilde{U}$-coboundary.

Let us assume that there exists a smooth function $w\in \mathscr{C}^{\infty}(\mathcal{M})$ such that $\beta = \widetilde{U}(-w)$. 
We claim that $F(p) = \varphi^Z_{w( p)}(p)$ realizes the conjugacy $F \circ \widetilde{h}_t = \varphi^U_t \circ F$.
In order to prove this, we compute the push-forward of $\widetilde{U}$ by $F$ and we show it equals $U$, namely $DF(\widetilde{U}) = U \circ F$.
For any smooth function $f$ and any point $p \in \mathcal{M}$, by the chain rule, we have
\begin{equation*}
\begin{split}
[DF(\widetilde{U})](f)(p) &= \widetilde{U}(f \circ F)(p) = \frac{\diff}{\diff t} \Big\rvert_{t=0} f \circ F \circ \widetilde{h}_t (p) = \frac{\diff}{\diff t} \Big\rvert_{t=0} f \circ \varphi^Z_{w( \widetilde{h}_t (p) )} \circ \widetilde{h}_t (p) \\
& = ((Zf \circ F) \widetilde{U}w)(p) + [D\varphi^Z_{w(p)}(\widetilde{U})](f)(p).
\end{split}
\end{equation*}
Since $[U,Z] = [Z,Z] = 0$, we deduce that 
$$
D\varphi^Z_{w(p)}(\widetilde{U}) = D\varphi^Z_{w(p)}(U + \beta Z) = U \circ \varphi^Z_{w(p)} + \beta \cdot (Z \circ \varphi^Z_{w(p)}).
$$
Therefore, since $\beta = \widetilde{U}(-w)$, we conclude
$$
[DF(\widetilde{U})](f) =  (Zf \circ F) \widetilde{U}w + Uf \circ F + \beta \cdot (Zf \circ F) = Uf \circ F,
$$
which proves our claim.

%%%%%%%%%%

Conversely, let us assume that there exists a $\mathscr{C}^1$-diffeomorphism $F$ such that $F \circ \widetilde{h}_t = \varphi^U_t \circ F$. 
Since $[\widetilde{U}, \widetilde{Z}]=0$, the push-forward $V:=F_{\ast}(\widetilde{Z})$ of $\widetilde{Z}$ commutes with $U =  F_{\ast}(\widetilde{U})$. 
The proof follows three steps: first, we show in Lemma \ref{th:vising} that $V$ is a left-invariant vector field; from this we deduce that $V$ is a constant multiple of $Z$, see Lemma \ref{th:visz}. Finally, we prove in Lemma \ref{th:isom_implies_cob} that this implies that $\lambda$ is cohomologous to a constant in $L^2$, and, exploiting a result by Wang \cite[Theorem 2.1]{wang}, we deduce that the transfer function is smooth. 

\begin{lemma}\label{th:vising}
The vector field $V$ is left-invariant, that is $V \in \lietre$.
\end{lemma}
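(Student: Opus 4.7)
The plan is to exploit the fact that the flow generated by $V$ commutes with the homogeneous flow $\{\varphi^U_t\}_{t \in \R}$, and to translate this into a pointwise identity on the components of $V$ in the left-invariant frame $\mathscr{B}$. First I would set $\varphi^V_s := F \circ \varphi^{\widetilde{Z}}_s \circ F^{-1}$, the flow generated by $V$. Combining the conjugacy $F \circ \widetilde{h}_t = \varphi^U_t \circ F$ with the commutation $\widetilde{h}_t \circ \varphi^{\widetilde{Z}}_s = \varphi^{\widetilde{Z}}_s \circ \widetilde{h}_t$ of Remark~\ref{rk:comm}, one immediately gets $\varphi^V_s \circ \varphi^U_t = \varphi^U_t \circ \varphi^V_s$. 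Differentiating in $s$ at $s = 0$ yields the pointwise relation
$$
V_{\varphi^U_t(p)} = (D\varphi^U_t)_p (V_p) \qquad \text{for all } p \in \mathcal{M},\ t \in \R.
$$

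Next I would expand $V_p = \sum_{X \in \mathscr{B}} v_X(p)\, X_p$, with continuous coefficients $v_X$ (since $F$ is a $\mathscr{C}^1$-diffeomorphism and the frame $\mathscr{B}$ is smooth). The key computation is that $\varphi^U_t$ acts by right multiplication by $\exp(tU)$, which conjugates a left-invariant vector field via $\Ad$: for every $X \in \mathscr{B}$,
$$
(D\varphi^U_t)_p X_p = \bigl(\Ad(\exp(-tU))\, X\bigr)_{\varphi^U_t(p)}.
$$
Plugging this into the previous identity and equating coefficients in the frame $\mathscr{B}$, the vector $\vec{v}(p) = (v_X(p))_{X \in \mathscr{B}}$ satisfies $\vec{v}(\varphi^U_t(p)) = \exp(-t\, \mathfrak{ad}_U)\, \vec{v}(p)$ for every $p \in \mathcal{M}$ and $t \in \R$, where $\mathfrak{ad}_U$ denotes the matrix of $[U,\,\cdot\,]$ on $\mathscr{B}$.

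The last step uses the nilpotence of $\mathfrak{ad}_U$: the right-hand side is a polynomial in $t$ with values in $\R^8$, while the left-hand side is uniformly bounded in $t$ (as $\vec{v}$ is continuous on the compact manifold $\mathcal{M}$). A bounded polynomial is constant, so $\mathfrak{ad}_U(\vec{v}(p)) = 0$ for every $p$ and, in particular, $\vec{v}(\varphi^U_t(p)) = \vec{v}(p)$. By Howe--Moore ergodicity of the unipotent flow $\varphi^U$ together with the continuity of $\vec{v}$, the function $\vec{v}$ is constant on $\mathcal{M}$. Hence $V$ is a fixed linear combination of the elements of $\mathscr{B}$, i.e.\ $V \in \lietre$.

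The only technical point I expect to have to watch is the push-forward formula for a left-invariant vector field under the right translation $\varphi^U_t$: one must be careful to obtain $\Ad(\exp(-tU))$ (rather than $\Ad(\exp(tU))$) and to evaluate it at the translated base point $\varphi^U_t(p)$. Once this identity is in place, the rest is a clean combination of the dichotomy ``a polynomial bounded on $\R$ is constant'' with ergodicity of the unipotent flow.
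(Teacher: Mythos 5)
Your proof is correct and takes a genuinely different, and in fact cleaner, route than the paper's. The paper works infinitesimally: it expands the vanishing bracket $[V,U]=0$ in the frame $\mathscr{B}$, obtaining a triangular system of equations $Ua_E = (\text{linear combination of the other } a_{E'})$, and then solves it by successive substitution, invoking ergodicity of $\varphi^U$ together with boundedness at each stage (a bounded function cannot have a nonzero constant $U$-derivative). Your version integrates the commutation relation to the flow level, uses the right-translation formula $(D\varphi^U_t)_p X_p = \bigl(\Ad(\exp(-tU))X\bigr)_{\varphi^U_t(p)}$ (which you have the correct sign for, given $\varphi^U_t(\Gamma g)=\Gamma g\exp(tU)$), and arrives at the exact identity $\vec v\circ\varphi^U_t = \exp(-t\,\mathfrak{ad}_U)\,\vec v$. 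Nilpotence of $\mathfrak{ad}_U$ is then used \emph{once}: the right-hand side is a bounded polynomial in $t$, hence constant, so $\vec v$ is $\varphi^U$-invariant and a single application of Howe--Moore ergodicity (with continuity) gives $\vec v\equiv\mathrm{const}$. This collapses the paper's coordinate-by-coordinate bookkeeping (and the implicit case analysis on which $c_{i,j}$ vanish) into one step, and it is also slightly more robust regarding regularity: since $F$ is only $\mathscr{C}^1$, the coefficients $v_X$ are merely continuous and forming $[V,U]$ directly requires some care, whereas your argument never differentiates $V$, only the smooth flow $\varphi^U_t$ and the $\mathscr{C}^1$ flow $\varphi^V_s$ in $s$ at $s=0$. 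Both proofs are correct; yours is the more conceptual.
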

\begin{proof}
Let us write $V = \sum_{E \in \mathscr{B}} a_E \cdot E$, where $\mathscr{B}$ is the frame in \eqref{eq:thebasis} and $a_E \colon \mathcal{M} \to \R$ are smooth functions. We will prove that they are constant. Indeed, since $V$ commutes with $U$, we have
\begin{equation*}
\begin{split}
0 = & [V, U] = \left[\sum_{E \in \mathscr{B} } a_E \cdot E,\ c_{1,2}E_{1,2} + c_{2,3} E_{2,3} + c_{1,3} E_{1,3} \right] = (-U a_{E_{3,1}} ) E_{3,1} \\
& + (- Ua_{E_{2,1}} - c_{2,3} \cdot a_{E_{3,1}}) E_{2,1} + (- Ua_{E_{3,2}} + c_{1,2} \cdot a_{E_{3,1}} ) E_{3,2} \\
& + (-Ua_{\frac{1}{2}(E_{1,1}- E_{2,2})} - c_{1,2} \cdot a_{E_{2,1}} - c_{1,3} \cdot a_{E_{3,1}}) \frac{1}{2}(E_{1,1}-E_{2,2}) \\
& + (-Ua_{\frac{1}{2}(E_{2,2}- E_{3,3})} - c_{2,3} \cdot a_{E_{3,2}} - c_{1,3} \cdot a_{E_{3,1}}) \frac{1}{2}(E_{2,2}-E_{3,3}) \\
& + \left(- Ua_{E_{1,2}} - c_{1,3} \cdot a_{E_{3,2}} + c_{1,2} \left( 2 a_{\frac{1}{2}(E_{1,1}- E_{2,2})} - a_{\frac{1}{2}(E_{2,2}- E_{3,3})} \right) \right) E_{1,2} \\
& + \left(- Ua_{E_{2,3}} + c_{1,3} \cdot a_{E_{2,1}} + c_{2,3} \left( 2 a_{\frac{1}{2}(E_{2,2}- E_{3,3})} - a_{\frac{1}{2}(E_{1,1}- E_{2,2})}\right) \right) E_{2,3} \\
& + \left(- Ua_{E_{1,3}} - c_{1,2} \cdot a_{E_{2,3}} + c_{2,3} \cdot a_{E_{1,2}} + c_{1,3} \left(a_{\frac{1}{2}(E_{1,1}- E_{2,2})} + a_{\frac{1}{2}(E_{2,2}- E_{3,3})} \right) \right) E_{1,3}.
\end{split}
\end{equation*}
All the coefficients in brackets are equal to zero, in particular, from the first one, we get $U a_{E_{3,1}} = 0$. By ergodicity of $U$, we deduce that $a_{E_{3,1}}$ is constant. Considering the second term in brackets, we obtain 
$$
Ua_{E_{2,1}} = - c_{2,3} \cdot a_{E_{3,1}} = \text{const},
$$
from which we deduce $a_{E_{3,1}} = 0$ and $a_{E_{2,1}}$ is constant. Proceeding in this way for all the remaining terms, we conclude that $a_E$ is constant for all $E \in \mathscr{B}$; that is, $V \in \lietre$.
\end{proof}

The next step, Lemma \ref{th:visz} below, exploits the notion of Kakutani equivalence. We recall that two measurable flows are \newword{Kakutani equivalent} if there exists a time-change of one which is isomorphic to the other. 

\begin{lemma}\label{th:visz}
We have that $V= F_{\ast}(\widetilde{Z}) = aZ$ for some constant $a \neq 0$.
\end{lemma}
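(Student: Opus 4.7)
The plan is to combine the commutation $[V,U]=0$ with a Kakutani-equivalence argument. First, I would note that since $F$ is a diffeomorphism and $[\widetilde{U},\widetilde{Z}]=0$ by Remark~\ref{rk:comm}, pushing forward yields $[U,V]=F_\ast[\widetilde{U},\widetilde{Z}]=0$. Combined with Lemma~\ref{th:vising}, this places $V$ in the centralizer of $U$ inside $\lietre$. A direct computation in the basis $\mathscr{B}$, exploiting $U\in\mathfrak{n}\setminus\mathfrak{z}(\mathfrak{n})$, shows that this centralizer equals $\R U+\R Z$, so I may write $V=aU+bZ$ for some $a,b\in\R$.

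The crucial step is to exclude $a\ne 0$. Here I would exploit that $F$ is a $C^{1}$-conjugacy between $\varphi^{\widetilde{Z}}$ --- itself a smooth time-change of $\varphi^{Z}$ --- and $\varphi^{V}$, so that $\varphi^{V}$ and $\varphi^{Z}$ are $C^{1}$-Kakutani equivalent. In the generic case $c_{1,2}c_{2,3}\ne 0$, $U$ is principal nilpotent, and so is $aU+bZ$ whenever $a\ne 0$; a direct computation of $\mathfrak{ad}_{aU+bZ}$ on $\lietre$ gives $\mathfrak{ad}_{aU+bZ}^{4}\ne 0$, so $\lVert D\varphi^{V}_{t}\rVert_{\infty}\asymp|t|^{4}$. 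On the other hand $\mathfrak{ad}_{Z}^{3}=0$, and the argument of Proposition~\ref{parabolic} applied to $\widetilde{Z}$ yields $\lVert D\varphi^{\widetilde{Z}}_{t}\rVert_{\infty}=O(|t|^{2})$. Since conjugation by a $C^{1}$-diffeomorphism on the compact manifold $\mathcal{M}$ preserves this polynomial order up to a bounded factor, the two estimates are incompatible for large $|t|$, forcing $a=0$. Hence $V=bZ$, and since $V=F_\ast\widetilde{Z}$ is nowhere vanishing (as $\widetilde{Z}=\lambda^{-1}Z$ with $\lambda>0$), $b\ne 0$.

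The hard part, I expect, will be the degenerate case $c_{1,2}c_{2,3}=0$: there $U$ is rank-one nilpotent and $\mathfrak{ad}_{aU+bZ}$ has the same Jordan structure on $\lietre$ as $\mathfrak{ad}_{Z}$, so the polynomial-growth invariant above cannot separate $\varphi^{V}$ from $\varphi^{Z}$. Handling this case cleanly would likely require a finer Kakutani invariant --- for example a slow-entropy computation in the spirit of Katok--Thouvenot --- or a Ratner-type rigidity statement for the $\varphi^{Z}$-action on $\mathcal{M}$.
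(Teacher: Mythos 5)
Your argument for the generic case $c_{1,2}c_{2,3}\neq 0$ is correct and, in fact, more elementary than what the paper does: you use the crude order of polynomial growth of $\lVert D\varphi^V_t\rVert$ (i.e.\ the nilpotency degree of $\mathfrak{ad}_V$, which is a $\mathscr{C}^1$-conjugacy invariant on a compact manifold) to separate the regular-nilpotent candidates $aU+bZ$, $a\neq 0$, from $Z$. The paper instead appeals uniformly, in all cases, to the finer Kakutani-equivalence invariant $e(\cdot,\log)$ of Kanigowski--Vinhage--Wei \cite{kanigowskietal}, computed from the Jordan block structure of $\mathfrak{ad}_V$, and never uses the growth-order comparison as a stand-alone argument. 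So in the generic case your route buys a self-contained proof that avoids citing \cite{kanigowskietal}.

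However, there is a genuine gap, and you have located it yourself: the degenerate case $c_{1,2}c_{2,3}=0$, which is allowed by the standing hypothesis $U\in\mathfrak{n}\setminus\mathfrak{z}(\mathfrak{n})$. Two things go wrong there. First, the line ``this centralizer equals $\R U+\R Z$'' is only true when $U$ is regular nilpotent; for $U=c_{1,2}E_{1,2}$ the centralizer of $U$ in $\lietre$ is four-dimensional, and its $\mathfrak{ad}$-nilpotent part is $\langle E_{1,2},E_{1,3},E_{3,2}\rangle$, so $V$ has an extra admissible direction (this is exactly the $E_{3,2}$, resp.\ $E_{2,1}$, possibility the paper flags). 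Second, and more seriously, the rank-one nilpotents in this centralizer have $\mathfrak{ad}^3=0$, the same polynomial order as $Z$, so the growth estimate $\lVert D\varphi^V_t\rVert=O(|t|^2)$ holds for them too and the contradiction disappears. The paper resolves this precisely with the ``finer Kakutani invariant'' you speculate about: by \cite[Theorem 1.1]{kanigowskietal}, $e(\{\varphi^Z_t\},\log)=2$ while $e(\{\varphi^V_t\},\log)>2$ for the remaining candidates $V\notin\langle Z\rangle$, which rules them out. To complete your proof you would need to import that result (or a comparable Kakutani-type invariant such as the slow entropy you mention) for the degenerate case; the polynomial-growth argument alone cannot close it.
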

\begin{proof}
We remark that the time-change $\{ \varphi^{\widetilde{Z}}_t \}_{t \in \R}$ is parabolic in the sense of Definition \ref{def1}; therefore $\{ \varphi^V_t \}_{t \in \R}$ must be parabolic as well. Since, by Lemma \ref{th:vising}, $V \in \lietre$ is a homogeneous vector field, it follows that $V$ is $\mathfrak{ad}$-nilpotent. 
It is possible to check explicitly that, if $c_{1,2} \cdot c_{2,3} \neq 0$, then $V \in \langle U, Z \rangle$; namely, the only $\mathfrak{ad}$-nilpotent $V$\rq{}s which commute with $U$ are linear combinations of $U$ and $Z$. The only other possibilities are when $U = c_{1,2} E_{1,2}$, in which case $V$ could be a multiple of $E_{3,2}$, or when $U = c_{2,3} E_{2,3}$, in which case $V$ could be a multiple of $E_{2,1}$.

Since the time-change $\widetilde{Z}$ is smoothly conjugated to $V$, the homogeneous flow induced by $Z$ is Kakutani equivalent to the one induced by $V$. In \cite{kanigowskietal}, the authors introduce an invariant $e( \cdot, \log)$ for Kakutani equivalence for unipotent flows and they provide an explicit formula to compute it in terms of the Jordan block structure of the adjoint matrix of their generating vector fields. It is an easy computation in our case to determine the Kakutani invariant $e( \{\varphi^V_t\}_{t \in \R}, \log)$ for any $V \in \lietre$ of the possibilities listed above, using \cite[Theorem 1.1]{kanigowskietal}. The Kakutani invariant for $Z$ is
$$
e( \{\varphi^Z_t\}_{t \in \R}, \log) = GR(Z) - 3 = 5-3 = 2;
$$
while, for all other cases above with $V \notin \langle Z \rangle$, we have $e( \{\varphi^V_t\}_{t \in \R}, \log)  > 2$. Therefore, we conclude that $V$ is a multiple of $Z$. 
\end{proof}

\begin{lemma}\label{th:isom_implies_cob}
If $\widetilde{Z} = \frac{1}{\lambda} Z$ is smoothly conjugate to $aZ$, with $a \neq 0$, then $\lambda - 1$ is a $L^2$-coboundary, namely there exists $w \in L^2(\mathcal{M})$ such that $\lambda -1 = Zw$.
\end{lemma}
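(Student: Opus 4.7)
My plan proceeds in three steps.

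First, I translate $G_*\widetilde Z = aZ$ into an orbit-level identity. Combining $G \circ \varphi^{\widetilde Z}_t = \varphi^{aZ}_t \circ G$ with $\varphi^{\widetilde Z}_t(p) = \varphi^Z_{\sigma_p(t)}(p)$, where $\sigma_p$ is the inverse of $s \mapsto \int_0^s \lambda\circ\varphi^Z_r(p)\,\diff r$, I obtain
\[
G(\varphi^Z_s(p)) = \varphi^Z_{\phi_p(s)}(G(p)), \qquad \phi_p(s) = a\int_0^s \lambda\circ\varphi^Z_r(p)\,\diff r.
\]
In particular $G$ maps $Z$-orbits to $Z$-orbits. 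Moreover, since $\varphi^{aZ}$ is ergodic with respect to $\omega$ by Howe--Moore and $G_*(\lambda\omega)$ is an absolutely continuous $\varphi^{aZ}$-invariant probability measure, we get $G_*(\lambda\omega) = \omega$, equivalently $G^*\omega = \lambda\omega$, i.e.~$|\det DG| = \lambda$.

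Second, I view $G$ as a measurable isomorphism between the two ergodic m.p.~flows $(\mathcal M, \varphi^{\widetilde Z}, \lambda\omega)$ and $(\mathcal M, \varphi^{aZ}, \omega)$, which are two time-changes of the common base $\varphi^Z$ with speeds $\lambda$ and $a$ respectively. Applying the classical Abramov--Kakutani--Totoki theorem on measurable equivalence of time-changes via a Rokhlin cross-section $\Sigma$ for $\varphi^Z$ with first-return map $T_\Sigma$ and return time $\tau$, the orbit identity of step one produces a cohomological relation $\int_0^\tau \lambda \circ \varphi^Z_r\,\diff r - a\tau = u\circ T_\Sigma - u$ between the roof functions, for some measurable $u$ on $\Sigma$. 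Extending $u$ from $\Sigma$ to $\mathcal M$ by suspension yields a measurable $w_0$ with $Zw_0 = \lambda - a$. Integrating against $\omega$ forces $a = \int_\mathcal{M}\lambda\,\omega = 1$, whence $Zw_0 = \lambda - 1$.

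Third, to upgrade $w_0$ to $L^2(\mathcal{M})$, I appeal to Browder's $L^2$-coboundary criterion: $\lambda - 1$ lies in the image $Z(L^2)$ precisely when the ergodic integrals $S_t(p) := \int_0^t (\lambda - 1)\circ\varphi^Z_s(p)\,\diff s$ are uniformly $L^2(\omega)$-bounded in $t\in\R$. By the first step, $aS_s(p) = \phi_p(s) - as$ measures the longitudinal displacement of $G$ along the orbit of $G(p)$ with respect to the constant-speed model. Using the $C^1$-boundedness of $G$ on the compact manifold $\mathcal M$, together with the Jacobian identity $|\det DG| = \lambda$, I derive the required uniform $L^2$-bound, producing $w \in L^2(\mathcal M)$ with $Zw = \lambda - 1$.

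The main obstacle is the third step: while the measurable transfer function $w_0$ from the second step is essentially automatic, a generic measurable coboundary need not be square-integrable, so obtaining $L^2$-regularity requires exploiting the $C^1$-smoothness of $G$ beyond mere measurability, together with compactness of $\mathcal M$ and the Jacobian constraint $|\det DG|=\lambda$. An alternative route, closer to the strategy sketched in the introduction, is to produce only the measurable coboundary via Abramov--Kakutani--Totoki and then directly invoke Wang's regularity result \cite[Theorem~2.1]{wang} to upgrade to a smooth transfer function, whence $L^2$ is immediate.
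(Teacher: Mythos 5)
Your outline correctly identifies the tool you need at the end (an $L^2$ Gottschalk--Hedlund criterion, which is exactly what the paper invokes via \cite[Lemma~5.7]{flaminioforni}), and your step one is a sound reformulation. But step three is asserted, not proved, and it is precisely where the actual work of the lemma lies. You claim that ``the $C^1$-boundedness of $G$\dots together with the Jacobian identity $|\det DG|=\lambda$'' yields uniform $L^2$-boundedness of the ergodic integrals $S_t$, but $C^1$-boundedness of $G$ and its Jacobian constraint do not, by themselves, control $\phi_p(s)-as$: bounding $\phi_p'(s)$ only says the displacement grows at most linearly, which is vacuous. The paper's proof gets the uniform bound out of a quite specific mechanism: it writes the conjugacy identity at the level of differentials $D\widetilde\varphi_t(F(p))\cdot DF(p)\cdot D\varphi^Z_{-at}(\varphi^Z_{at}(p)) = DF(\varphi^Z_{at}(p))$, expands all three matrices in the left-invariant frame $\mathscr B$, and uses the fact that $D\varphi^Z_{-at}$ has entries growing linearly and quadratically in $t$ in the parabolic directions while the right-hand side $DF\circ\varphi^Z_{at}$ is uniformly bounded. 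Reading the first and fourth rows of this matrix equation forces several entries of $DF$ to vanish identically, and then forces $|\Lambda_t(q)-c(p)\,t|\le K$ uniformly, with $c(p)=1$ by ergodicity. That triangular, polynomial-growth structure of $\mathfrak{ad}_Z$ is the crux, and nothing in your sketch recovers it.

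Your proposed alternative route also does not close the gap. The Abramov--Kakutani--Totoki / Rokhlin-section argument produces only a \emph{measurable} transfer function $w_0$ with $Zw_0=\lambda-1$, and Wang's regularity theorem \cite[Theorem~2.1]{wang} upgrades a transfer function from $L^2$ (or a Sobolev space) to $C^\infty$; it does not apply to a merely measurable solution. In the paper, Wang's theorem is invoked only \emph{after} the $L^2$-coboundary statement of this lemma is in hand. So the $L^2$ step cannot be bypassed, and establishing it requires the explicit differential computation (or some equivalent exploitation of the parabolic growth of $D\varphi^Z_t$) that your proposal leaves out.
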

\begin{proof}
Let us denote by $\widetilde{\varphi}_t$ the flow induced by $\widetilde{Z} = \frac{1}{\lambda} Z$. 
By assumption there exists a diffeomorphism $F \colon \mathcal{M} \to \mathcal{M}$ such that for every $p \in \mathcal{M}$ and for every $t \in \R$ 
$$
 \widetilde{\varphi}_t \circ F (p) = F \circ \varphi^{aZ}_t(p).
$$
Considering the differentials, and using $\varphi^{aZ}_t = \varphi^{Z}_{at}$, we have
\begin{equation}\label{eq:1}
D \widetilde{\varphi}_t (F(p)) \cdot DF(p) \cdot \left( D\varphi^Z_{at}(p) \right)^{-1} = DF(\varphi^Z_{at}(p)).
\end{equation}
Notice that $\left( D\varphi^Z_{at}(p) \right)^{-1}  = D\varphi^Z_{-at}(\varphi^Z_{at}(p))$.

We write all the matrices with respect to the frame $\mathscr{B}$ introduced in \eqref{eq:thebasis}.
Denote by $f_{i,j}(p)$ the entries of the $8 \times 8$ matrix $DF(p)$.
Since $F$ is a $\mathscr{C}^1$-diffeomorphism and $\mathcal{M}$ is compact, each entry $f_{i,j}(\varphi^Z_{at}(p))$ of $DF(\varphi^Z_{at}(p))$ is uniformly bounded in $t$ and~$p$.

Standard computations give us
$$
D\varphi^Z_{-at}(\varphi^Z_{at}(p)) = 
\begin{pmatrix}
1 & \ & \ & \ & \ & \ & \ & \ \\
0 & 1 & \ & \ & \ & \ & \ & \ \\
0 & 0 & 1 & \ & \ & \ & \ & \ \\
2at & 0 & 0 & 1 & \ & \ & \ & \ \\
2at & 0 & 0 & 0 & 1 & \ & \ & \ \\
0 & 0 & at & 0 & 0 & 1 & \ & \ \\
0 & -at & 0 & 0 & 0 & 0 & 1 & \ \\
-(at)^2 & 0 & 0 & -\frac{at}{2} & -\frac{at}{2} & 0 & 0 & 1 
\end{pmatrix},
$$
so that the $j$-th row $R_j(p)$ of the product $DF(p) \cdot D\varphi^Z_{-at}(\varphi^Z_{at}(p))$ equals
$$
R_j(p) = \mathbf{e}_j^T \cdot DF(p) \cdot D\varphi^Z_{-at}(\varphi^Z_{at}(p)) = 
\begin{pmatrix}
f_{j,1}(p) + 2at(f_{j,4}(p) +f_{j,5}(p)) - (at)^2 f_{j,8}(p) \\
f_{j,2}(p) - at f_{j,7}(p) \\
f_{j,3}(p) + atf_{j,6}(p) \\
f_{j,4}(p) - \frac{at}{2} f_{j,8}(p) \\
f_{j,5}(p) - \frac{at}{2} f_{j,8}(p) \\
f_{j,6}(p) \\
f_{j,7}(p) \\
f_{j,8}(p)
\end{pmatrix}^T,
$$
where $(\cdot)^T$ denotes the transpose.

We now compute the matrix $D \widetilde{\varphi}_t$. As we did in \S\ref{s:ODE}, let us write 
$$
\left( \widetilde{\varphi}_t\right)_{\ast} (A) = \sum_{E \in \mathscr{B}} a_E(t)  E.
$$ 
In order to determine the functions $a_E(t)$, we have to solve the system of ODEs
$$
\sum_{E \in \mathscr{B}} \frac{\diff}{\diff t} (a_E(t) \circ  \widetilde{\varphi}_t) E \circ  \widetilde{\varphi}_t = \sum_{E \in \mathscr{B}} - (a_E(t) \circ  \widetilde{\varphi}_t)  \left[ \frac{1}{\lambda}Z, E \right] \circ  \widetilde{\varphi}_t,
$$
namely
\begin{equation*}
\begin{split}
& \frac{\diff}{\diff t} (a_{ E_{3,1}}(t) \circ  \widetilde{\varphi}_t) = 0 \\
& \frac{\diff}{\diff t} (a_{E_{2,1}}(t) \circ  \widetilde{\varphi}_t) = 0 \\
& \frac{\diff}{\diff t} (a_{E_{3,2}}(t) \circ  \widetilde{\varphi}_t) = 0 \\
& \frac{\diff}{\diff t} (a_{E_{1,1} - E_{2,2}}(t) \circ  \widetilde{\varphi}_t) = - \frac{a_{ E_{3,1}}(t)}{ \lambda } \circ  \widetilde{\varphi}_t\\
& \frac{\diff}{\diff t} (a_{E_{2,2} - E_{3,3}}(t) \circ  \widetilde{\varphi}_t) = - \frac{a_{ E_{3,1}}(t) }{ \lambda  } \circ  \widetilde{\varphi}_t \\
& \frac{\diff}{\diff t} (a_{E_{1,2}}(t) \circ  \widetilde{\varphi}_t) = - \frac{a_{ E_{3,2}}(t) }{ \lambda } \circ  \widetilde{\varphi}_t \\
& \frac{\diff}{\diff t} (a_{E_{2,3}}(t) \circ  \widetilde{\varphi}_t) = \frac{a_{ E_{2,1}}(t) }{ \lambda } \circ  \widetilde{\varphi}_t \\
& \frac{\diff}{\diff t} (a_Z(t) \circ  \widetilde{\varphi}_t) = \frac{a_{E_{1,1} - E_{2,2}} + a_{E_{2,2} - E_{3,3}} }{ \lambda } \circ  \widetilde{\varphi}_t+ \sum_{V \in \mathscr{B}}  a_V(t) \circ  \widetilde{\varphi}_t V\left( \frac{1}{\lambda \circ  \widetilde{\varphi}_t}\right).\\
\end{split}
\end{equation*}
If we denote by 
$$
\Lambda_t(p)= \int_0^t \frac{1}{\lambda} \circ \widetilde{\varphi}_{\tau}(p) \diff \tau,
$$
we can write the matrix $D \widetilde{\varphi}_t$ as
$$
D \widetilde{\varphi}_t(q) =
\begin{pmatrix}
1 & \ & \ & \ & \ & \ & \ & \ \\
0 & 1 & \ & \ & \ & \ & \ & \ \\
0 & 0 & 1 & \ & \ & \ & \ & \ \\
-\Lambda_t(q) & 0 & 0 & 1 & \ & \ & \ & \ \\
-\Lambda_t(q) & 0 & 0 & 0 & 1 & \ & \ & \ \\
0 & 0 & -\Lambda_t(q)& 0 & 0 & 1 & \ & \ \\
0 & \Lambda_t(q) & 0 & 0 & 0 & 0 & 1 & \ \\
\ast & \ast & \ast & \ast & \ast & \ast & \ast & \ast 
\end{pmatrix}.
$$

We compute the first row of the matrix product on the left hand-side of \eqref{eq:1}: since the first row of  $D \widetilde{\varphi}_t$ is $\mathbf{e}_1^T$, it equals 
\begin{equation*}
R_1(p) = 
\begin{pmatrix}
f_{1,1}(p) + 2at(f_{1,4}(p) +f_{1,5}(p)) - (at)^2 f_{1,8}(p) \\
f_{1,2}(p) - at f_{1,7}(p) \\
f_{1,3}(p) + atf_{1,6}(p) \\
f_{1,4}(p) - \frac{at}{2} f_{1,8}(p) \\
f_{1,5}(p) - \frac{at}{2} f_{1,8}(p) \\
f_{1,6}(p) \\
f_{1,7}(p) \\
f_{1,8}(p)
\end{pmatrix}^T
\end{equation*}
As we remarked, each entry has to be bounded uniformly in $t$ for all $p$. Therefore, we deduce that $f_{1,6} \equiv f_{1,7} \equiv f_{1,8} \equiv 0$ and $f_{1,4} \equiv - f_{1,5}$.
Since $DF(p)$ is invertible, for each $p \in \mathcal{M}$, at least one between $f_{1,1}(p), \dots, f_{1,5}(p)$ is not zero.

We now compute the 4th row of the matrix product on the left hand-side of \eqref{eq:1}: it equals 
\begin{equation*}
R_4(p) - \Lambda_t(q) R_1(p) = 
\begin{pmatrix}
f_{4,1}(p) + 2at(f_{4,4}(p) +f_{4,5}(p)) - (at)^2 f_{4,8}(p)  - f_{1,1}(p) \cdot \Lambda_t(q)\\
f_{4,2}(p) - at f_{4,7}(p) - f_{1,2}(p) \cdot \Lambda_t(q) \\
f_{4,3}(p) + at f_{4,6}(p) - f_{1,3}(p) \cdot \Lambda_t(q) \\
f_{4,4}(p) - \frac{at}{2} f_{4,8}(p) - f_{1,4}(p) \cdot \Lambda_t(q) \\
f_{4,5}(p) - \frac{at}{2} f_{4,8}(p) - f_{1,5}(p) \cdot \Lambda_t(q) \\
f_{4,6}(p) \\
f_{4,7}(p) \\
f_{4,8}(p)
\end{pmatrix}^T
\end{equation*}
Since $| \Lambda_t(q) | \leq \text{const} \cdot t$, we must have $f_{4,8} \equiv 0$.
Moreover, since for each $p \in \mathcal{M}$ at least one between $f_{1,1}(p), \dots, f_{1,5}(p)$ is not zero, by looking at the corresponding entry above, we deduce that there exists a constant $K >0$ such that, for each $p \in \mathcal{M}$, we have
$$
\left\lvert \Lambda_t(q) - c(p) t \right\rvert \leq K.
$$
By ergodicity of $\widetilde{\varphi}_t$, for almost all $q=F(p) \in \mathcal{M}$ we have 
$$
c(p) = \int_{\mathcal{M}} \frac{1}{\lambda}\ \lambda \omega = 1,
$$
and the integral 
$$
\Lambda_t(q) - t = \int_0^t \left(\frac{1}{\lambda}-1\right) \circ \widetilde{\varphi}_{\tau}(q) \diff \tau
$$
is uniformly bounded by $K$. By the $L^2$-version of Gottschalk-Hedlund (see, e.g., \cite[Lemma 5.7]{flaminioforni}), it follows that $\frac{1}{\lambda} -1 $ is a $L^2$-coboundary for $\frac{1}{\lambda} Z$, or, equivalently, there exists $w \in L^2(\mathcal{M})$ such that $\lambda - 1 = Zw$.
\end{proof}

We can now conclude the proof of Theorem \ref{thm:conjug}. By Lemma \ref{th:isom_implies_cob}, there exists $w \in L^2(\mathcal{M})$ such that $\lambda -1 = Zw >-1$. By \cite[Theorem 2.1]{wang} and by Sobolev Embedding Theorem, we deduce that $w$ is a smooth function on $\mathcal{M}$. By \eqref{eq:vol},
$$
0 = U\lambda + Z(\beta \lambda) = UZw + Z(\beta \lambda) = Z( Uw + \beta \lambda)= Z ( Uw + \beta + \beta Zw);
$$
therefore, by ergodicity of $\{\varphi^Z_t\}_{t \in \R}$, we conclude that 
$$
\beta = -Uw - \beta Zw = \widetilde{U}(-w).
$$

%%%%%%%%%%

\subsection*{Acknowledgments} I am grateful to Giovanni Forni for many enlightening suggestions. I would also like to thank Danijela Damjanovic, Livio Flaminio, Vinay Kumaraswamy, and my supervisor Corinna Ulcigrai for several helpful conversations. 
I thank the referees for their valuable comments and suggestions on a previous version of the paper.
The research leading to these results has received funding from the European Research Council under the European Union Seventh Framework Programme (FP/2007-2013)/ERC Grant Agreement n.~335989.

\end{document}